\newcommand{\xn}[1]{\textcolor{magenta}{#1}}
\def\lb{\bm{\lambda}}
\def\up{\underline{p}}
\def\op{\overline{p}}
\def\uq{\underline{q}}
\def\oq{\overline{q}}
\begin{document}
\title{DISH: A Distributed Hybrid Primal-Dual Optimization Framework to Utilize System Heterogeneity}

\author{Xiaochun Niu and Ermin Wei
\thanks{The authors are with the Department of Industrial Engineering and Management Sciences, Northwestern University, Evanston, IL 60201 USA.}
\thanks{This work was supported in part by the National Science Foundation (NSF) under Grant ECCS-2030251 and CMMI-2024774.}}



\maketitle

\begin{abstract}
We consider solving distributed consensus optimization problems over multi-agent networks. Current distributed methods fail to capture the heterogeneity among agents' local computation capacities. We propose DISH as
a \textit{\underline{dis}tributed \underline{h}ybrid primal-dual algorithmic framework} to handle and utilize system heterogeneity. Specifically, DISH allows those agents with higher computational capabilities or cheaper computational costs to implement Newton-type updates locally, while other agents can adopt the much simpler gradient-type updates. We show that DISH is a general framework and includes EXTRA, DIGing, and ESOM-0 as special cases. Moreover, when all agents take both primal and dual Newton-type updates, DISH approximates Newton's method by estimating both primal and dual Hessians. Theoretically, we show that DISH achieves a linear (Q-linear) convergence rate to the exact optimal solution for strongly convex functions, regardless of agents' choices of gradient-type and Newton-type updates. Finally, we perform numerical studies to demonstrate the efficacy of DISH in practice. To the best of our knowledge, DISH is the first hybrid method allowing heterogeneous local updates for distributed consensus optimization under general network topology with provable convergence and rate guarantees.
\end{abstract}



\section{Introduction}
Distributed optimization problems over a connected network with multiple agents have gained significant attention recently. This is motivated by a wide range of applications such as power grids \cite{giannakis2013monitoring,dorfler2015breaking}, sensor networks \cite{ling2010decentralized,schizas2007consensus}, communication networks \cite{lamnabhi2017systems,yang2019survey}, and machine learning \cite{sayed2014adaptation,warnat2021swarm}. In such problems, each agent only has access to its local data and only communicates with its neighbors in the network due to privacy issues or communication budgets \cite{konevcny2016federated}. 
All agents in the system aim to optimize an objective function collaboratively by employing a distributed procedure. Formally, we denote by $\cG=\{\cN,\cE\}$ a connected undirected network with the node set $\cN=\{1,\cdots,n\}$ and the edge set $\cE\subseteq \{\{i,j\}\given i,j\in\cN, i\neq j\}$. We study the distributed optimization problem over $\cG$,
\#\label{prob:original}
    \min_{\omega\in \RR^d} \quad\sum\limits_{i=1}^n f_i(\omega),
\#
where $\omega\in\RR^d$ is the decision variable and $f_i:\RR^d\to \RR$ is the local objective function corresponding to the $i^{th}$ agent. For instance, if we consider an empirical risk minimization problem in a supervised learning setting, the goal of the system is to learn a shared model over all the data in the network without exchanging local data, where local $f_i$ denotes expected loss over the local data at the $i^{th}$ agent.

In order to develop a distributed method for solving Problem \ref{prob:original}, we decouple the computation of individual agents by introducing the local copy of the decision variable at the $i^{th}$ agent as $x_i \in \RR^d$. We formulate Problem \ref{prob:original} over the network $\cG$ as a \textit{consensus optimization} problem \cite{bertsekas1989parallel,nedic2009distributed},
\#\label{prob:constrained_intro}
    \min_{x_1,\cdots,x_n}  \  \sum\limits_{i=1}^n f_i(x_i) \ \
    \text{s.t. } x_i = x_j, \text{ for } \{i,j\}\in\cE.
\#
The consensus constraint $x_i = x_j$ for $\{i,j\}\in\cE$ enforces the equivalence of  Problems \ref{prob:original} and \ref{prob:constrained_intro} for a connected network $\cG$.

While there is growing literature on developing distributed optimization algorithms to solve Problem \ref{prob:constrained_intro}, most existing methods require all agents to take the same type of updates. Such methods include gradient-type methods \cite{nedic2009distributed,shi2015extra,nedic2017achieving,qu2017harnessing} and Newton-type methods \cite{shamir2014communication,zhang2015disco,wang2018giant,crane2019dingo}. With these methods, if any agent in a system cannot handle high-order computation, a fast-converging method utilizing higher-order information will not be applicable to the whole system. As a result, the system could not fully utilize the distributed computation capability when faced with heterogeneity. This is in stark contrast to the fact that many practical distributed systems have heterogeneous agents. There can be drastically varying computation and communication capabilities among the agents due to different hardware, network connectivity, and battery power. \cite{chen2015mxnet}. 
\begin{figure}[!t]
\centering
\includegraphics[width=0.7\linewidth]{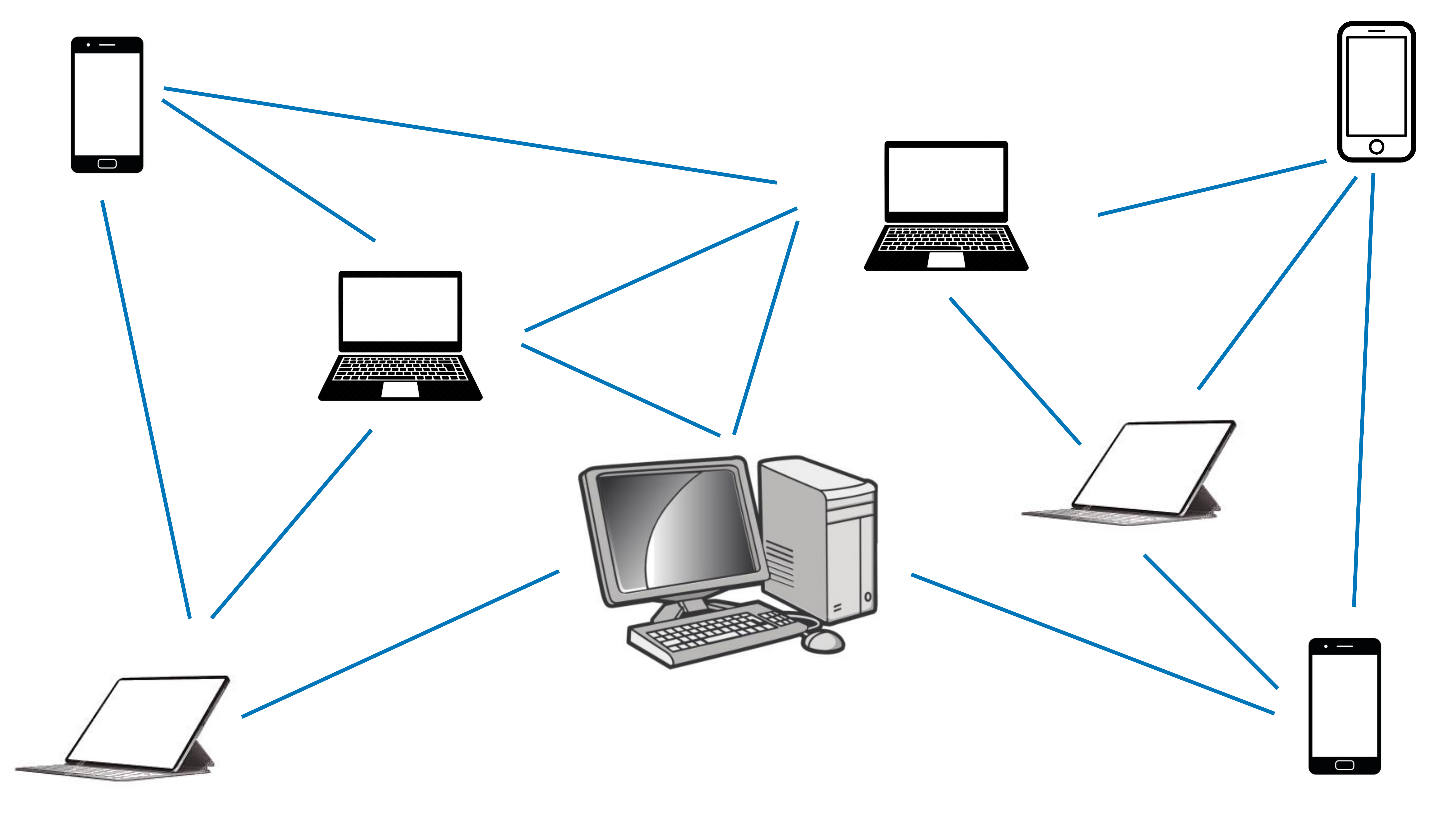}
     \caption{A Heterogeneous System.}
     \label{fig:Hetero}
\end{figure}
Figure \ref{fig:Hetero} shows an example of a heterogeneous system. Moreover, due to the recent global chip shortage, processors with advanced computation capability have very limited availability. 
Consequently, many distributed computation systems have only a few agents with advanced hardware co-existing with many older processors. Therefore, it is imperative to provide a flexible and efficient hybrid method to utilize heterogeneous agents. To the best of our knowledge, this paper takes the first step in this direction. 

In order to handle and utilize the system heterogeneity, we propose a \textit{\underline{dis}tributed \underline{h}ybrid primal-dual algorithmic framework} named DISH. DISH allows agents to choose gradient-type updates or Newton-type updates based on their computation capabilities. Specifically, there can be both gradient-type and Newton-type agents in the same communication round and each agent can switch to either type of updates based on its current situation. We show that DISH include primal-dual gradient-type methods such as EXTRA \cite{shi2015extra}, DIGing \cite{nedic2017achieving}, and \cite{qu2017harnessing} and primal-Newton-dual-gradient methods like ESOM \cite{mokhtari2016decentralized} as special cases. 
Theoretically, we show that DISH achieves a linear (Q-linear) convergence rate to the exact optimal solution for strongly convex functions, regardless of agents' choices of gradient-type and Newton-type updates. Finally, we conduct numerical experiments on decentralized least squares problems and logistic regression problems and demonstrate the efficacy of the DISH algorithmic framework. We observe that when all agents always take primal-dual Newton-type updates, DISH offers faster convergence speed over gradient methods.

\vskip4pt
\noindent\textbf{Related Works. }Our work is related to the proliferating literature on distributed optimization methods to solve Problem \ref{prob:constrained_intro}.
There are first-order primal iterative methods, like distributed (sub)-gradient descent (DGD) \cite{nedic2009distributed}, which takes a linear combination of a local gradient descent step and a weighted average among local neighbors. DGD finds a near-optimal solution with constant stepsize. 
Based on DGD, other related methods including \cite{shi2015extra,nedic2017achieving,qu2017harnessing,jakovetic2018unification} use gradient tracking technique, which can find the exact solution with constant stepsize and be viewed as primal-dual gradient methods with respect to augmented Lagrangian formulation. 
Second-order primal methods, including Network Newton \cite{7094740} and Distributed Newton method \cite{tutunov2019distributed}, rely on an inner loop to iteratively approximate a Newton step.  \cite{9069966} derives a DGD based method with the inclusion of first and second-order updates in the continuous-time setting. Their method cannot be directly applied in discrete-time and lacks convergence rate analysis. Another popular approach is to use dual decomposition-based methods such as ADMM \cite{boyd2011distributed,wang2019global}, CoCoA\cite{smith2018cocoa}, ESOM \cite{mokhtari2016decentralized}, and PD-QN \cite{eisen2019primal}. Among these, PD-QN is a primal-dual quasi-Newton method with a linear convergence guarantee. ESOM is most related to our approach, which proposes to perform second-order updates
in the primal space and first-order updates in the dual space and has a provable linear convergence rate. However, none of these methods allow different types of updates for heterogeneous agents.
Our earlier work \cite{niu2021fedhybrid} develops a linearly converging distributed primal-dual hybrid method that allows different types of updates, but relies on the structure of a server-client (federated) network. To the best of our knowledge, DISH is the first hybrid method allowing heterogeneous local updates for distributed consensus optimization under general network topology with provable convergence and rate guarantees.

\vskip4pt
\noindent\textbf{Contributions. } Our main contributions are fourfold:
\begin{itemize}
    \item We propose DISH as a distributed hybrid primal-dual algorithmic framework, which allows agents to employ both gradient-type and Newton-type information to harvest system heterogeneity.
        \item For the agents capable of second order computation in DISH, we develop a Newton-type method that approximates Newton's step in both the primal and the dual spaces with a distributed implementation.
        \item We show a linear convergence analysis of DISH to find the optimal solution regardless of agents' choice of gradient-type or Newton-type updates.
        \item We conduct numerical experiments and demonstrate the efficacy of DISH in practice.
\end{itemize}

\vskip4pt
\noindent\textbf{Notations.} We denoted by $\otimes$ the Kronecker product. For any $m\in \ZZ^+$, we denote by $I_m\in\RR^{m\times m}$ the identity matrix and  $ \mathds{1}_m= \rbr{1,\cdots,1}^\intercal\in\RR^m$ the vector of all ones. 
For any symmetric matrix $S$, we denote by $\rho(S)$ its spectral radius. For any positive semidefinite matrix $M\in\RR^{p\times p}$, we denote by $\sigma_{\min}^+(M)$ its smallest positive eigenvalue and $\|y\|_{M}^2=y^\intercal My$ for any $y\in\RR^{p}$. For any positive definite matrix $A$, we denote by $\theta_{\min}(A)$ its smallest eigenvalue.


\section{Preliminaries}
In this section, we reformulate Problem \ref{prob:constrained_intro} in a compact form and introduce its dual problem {based on the augmented Lagrangian} \cite{bertsekas2014constrained}, which prepares our derivation of DISH.

\vskip4pt
\noindent\textbf{Equivalent 
Reformulation.}
For compactness, we reformulate Problem \ref{prob:constrained_intro} in the following equivalent form,
\#\label{prob:constrained_pre}
    \min_{\xb\in\RR^{nd}}  \quad f(\xb) = \sum_{i=1}^n f_i(x_i) \quad
    \text{s.t. } (Z\otimes I_d) \xb = \xb, 
\#
where $\xb = \rbr{x_1^\intercal,\cdots,x_{n}^\intercal}^\intercal$ is the concatenation of local variables,  $f:\RR^{nd}\to\RR$ is the aggregate function, and $Z\in \RR^{d \times d}$ with elements $z_{ij}$ is a consensus matrix corresponding to $\cG$. We emphasize that $Z$ satisfies the following assumption.
\begin{assumption}\label{ass:consensus} The consensus matrix $Z$ satisfies that:
\begin{itemize}
    \item[(a)] Off-diagonal elements: $z_{ij}\neq 0$ if and only if $\{i,j\} \not\in \cE$;
    \item[(b)] Diagonal elements: $z_{ii}>0$ for all $i\in\cN$;
    \item[(c)] $z_{ij}=z_{ji}$ for all $i\neq j$ and $i,j\in\cN$;
    \item[(d)] $Z\mathds{1}_n = \mathds{1}_n$.
\end{itemize}
\end{assumption}
Assumption \ref{ass:consensus} is standard for consensus matrices, where (a) states the right sparsity pattern of $Z$, (b) ensures the aperiodicity of $\cG$, and (c) and (d) impose that $Z$ is symmetric and doubly stochastic. We denote by $\gamma$ the second largest eigenvalue of $Z$. With the irreducibility of $Z$ guaranteed by the connectness of $\cG$, by Perron-Frobenius theorem, we have  $\rho(Z)=1$, $\gamma<1$, and $\text{null}(I-Z) = \text{span}\{\mathds{1}_{n}\}$. A matrix $Z$ under Assumption \ref{ass:consensus} is known as the consensus matrix due to its property that $(Z\otimes I_d) \xb = \xb$ if and only if $x_i = x_j$ for all $i,j\in\cN$ \cite{nedic2009distributed}. If we denote $W=(I_{n} - Z)\otimes I_d $, we have $\sigma_{\min}^+(W)=1-\gamma$ and $\text{null}(W) = \text{span}\{\mathds{1}_{n}\otimes y:y\in\RR^{d}\}$. We can rewrite Problem \ref{prob:constrained_pre} using the matrix $W$ as follows,
\#\label{prob:constrained}
    \min_{\xb\in\RR^{nd}}  \quad f(\xb) = \sum_{i=1}^n f_i(x_i) \quad
    \text{s.t. } W \xb = 0. 
\#
We will impose the next assumption 
throughout the paper.
\begin{assumption}
\label{ass:hessian}
The local function $f_i$ is twice differentiable, $s_i$-strongly convex, and $\ell_i$-Lipschitz smooth with positive constants $0 < {s_i} \le \ell_i <\infty$ for any agent $i\in\cN$.
\end{assumption}

Assumption \ref{ass:hessian} postulates that the local Hessian is bounded by ${s_i} I_d \preceq  \nabla^2f_i(\cdot)\preceq {\ell_i} I_d$ for any $i \in\cN$.
For convenience, we denote by $s = \min_{i\in\cN}\{s_i\}$ and $\ell = \max_{i\in\cN}\{\ell_i\}$.

\vskip4pt
\noindent\textbf{Augmented Lagrangian and Dual Problem.} In order to develop primal-dual methods for solving Problem \ref{prob:constrained} with the consensus constraint, we introduce the dual function based on the augmented Lagrangian. 
We denote by $\lb = \rbr{\lambda_1^\intercal,\cdots, \lambda_n^\intercal}^\intercal$ the dual variable with $\lambda_i\in\RR^d$ associated with the constraint $z_{ii}x_i - \sum_{j\in\cN}z_{ij}x_j=0$ at agent $i$. We define the augmented Lagrangian $L(\xb, \lb)$ of Problem \ref{prob:constrained} as
\#\label{eq:lagrangian_func}
L(\xb,\lb) = f(\xb) + \lb^{\intercal}W \xb + \dfrac{\mu}{2}\xb^{\intercal}V\xb,
\#
where $\mu\ge0$ and $V\in\RR^{nd\times nd}$ is positive semi-definite with $\text{null}(V) = \text{span}\{\mathds{1}_{n}\otimes y:y\in\RR^{d}\}$. The augmentation term $\mu \xb^{\intercal}V\xb / 2$ serves as a penalty for the violation of the consensus constraint. Examples of choices for $V$ include $W$ and $W^2$. For convenience, throughout the paper, we will use $V=W$. We remark that $L(\xb,\lb)$ is the (unaugmented) Lagrangian function when $\mu=0$. The augmented Lagrangian in \eqref{eq:lagrangian_func} can also be viewed as the (unaugmented) Lagrangian associated with the penalized problem 
\#\label{prob:penalized}
\min_{\xb\in\RR^{nd}}  f(\xb) + \frac{\mu}{2}{\xb}^{\intercal}V{\xb}\quad \text{ s.t. }W\xb=0.
\# Problem \ref{prob:penalized} is equivalent to Problem \ref{prob:constrained} since $\mu \xb^{\intercal}V\xb / 2$ is zero for any feasible $\xb$. By the convexity condition in Assumption \ref{ass:hessian} and the Slater’s condition,  strong duality holds for Problem \ref{prob:penalized} \cite{boyd2004convex}. Thus, Problem \ref{prob:penalized}, as well as Problem \ref{prob:constrained}, are  equivalent to the following dual problem,
\#\label{prob:dual}
\max_{\lb\in\RR^{nd}} g(\lb), \text{ where } g(\lb) = \min_{\xb\in\RR^{nd}} L(\xb, \lb),
\#
where we refer to $g:\RR^{nd}\to\RR$ as the dual function. For any $\lb\in\RR^{nd}$, as we will show in Lemma \ref{lem:L_property}, the function $L(\cdot,\lb)$ is strongly convex with a unique minimizer defined as
\#\label{eq:x_star}
\xb^*(\lb)= \argmin_{\xb\in\RR^{nd}} L(\xb,\lb).
\#
By the definition of $g$ in \eqref{prob:dual}, we have $L(\xb^*(\lb), \lb) =g(\lb)$.
We show the explicit forms of the gradient and the Hessian of the dual function $g$ in the following lemma \cite{tapia1977diagonalized}.
\begin{lemma} \label{lem:dual_hessian}
Under Assumption \ref{ass:hessian}, with $ \xb^*(\lb)$ defined in \eqref{eq:x_star}, the gradient and the Hessian of the dual function $g(\lb)$ defined in Problem \ref{prob:dual} are given by
\$
&\nabla g(\lb) = W\xb^*(\lb), \\
&\nabla^2 g(\lb) =  -W(\nabla_{\xb\xb}^2 L(\xb^*(\lb),\lb))^{-1}W.
\$
\end{lemma}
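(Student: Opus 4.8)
The plan is to treat $g(\lb) = L(\xb^*(\lb), \lb)$ as a minimum-value function and combine the envelope theorem for the gradient with implicit differentiation of the stationarity condition for the Hessian. First I would record the regularity facts that underlie everything: by Assumption \ref{ass:hessian} the function $f$ is $s$-strongly convex and $V\succeq 0$, so the primal Hessian satisfies
\[
\nabla_{\xb\xb}^2 L(\xb,\lb) = \nabla^2 f(\xb) + \mu V \succeq s I \succ 0
\]
uniformly in $(\xb,\lb)$. This makes $L(\cdot,\lb)$ strongly convex, so that $\xb^*(\lb)$ is its unique unconstrained minimizer, and it makes $\nabla_{\xb\xb}^2 L$ invertible, so that the implicit function theorem guarantees $\xb^*(\lb)$ is continuously differentiable in $\lb$.

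The second ingredient is the first-order stationarity condition characterizing the minimizer, namely
\[
\nabla_{\xb} L(\xb^*(\lb),\lb) = \nabla f(\xb^*(\lb)) + W\lb + \mu V \xb^*(\lb) = 0,
\]
which holds for every $\lb$ because $\xb^*(\lb)$ is an interior minimizer; here I use that $W$ is symmetric, so $\nabla_{\lb}(\lb^\intercal W \xb) = W\xb$ and $\nabla_{\xb}(\lb^\intercal W\xb)=W\lb$. For the gradient I would differentiate $g(\lb)=L(\xb^*(\lb),\lb)$ by the chain rule: the term carrying the Jacobian $\partial \xb^*/\partial\lb$ is contracted against $\nabla_{\xb}L(\xb^*(\lb),\lb)$, which vanishes by stationarity, so only the explicit $\lb$-dependence survives and $\nabla g(\lb) = \nabla_{\lb}L(\xb^*(\lb),\lb) = W\xb^*(\lb)$.

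For the Hessian I would differentiate the identity $\nabla g(\lb) = W\xb^*(\lb)$ once more, obtaining $\nabla^2 g(\lb) = W\,(\partial\xb^*/\partial\lb)$. To evaluate the Jacobian I implicitly differentiate the stationarity condition in $\lb$, which gives $\nabla_{\xb\xb}^2 L(\xb^*(\lb),\lb)\,(\partial\xb^*/\partial\lb) + W = 0$, hence $\partial\xb^*/\partial\lb = -(\nabla_{\xb\xb}^2 L(\xb^*(\lb),\lb))^{-1}W$, where invertibility is exactly the uniform bound from the first step. Substituting back yields $\nabla^2 g(\lb) = -W(\nabla_{\xb\xb}^2 L(\xb^*(\lb),\lb))^{-1}W$, as claimed.

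The computation itself is short and routine; the part deserving the most care is the opening regularity step, since both the envelope argument and the implicit differentiation silently rely on the uniform positive definiteness of $\nabla_{\xb\xb}^2 L$ to license the chain rule and the matrix inverse. Given Assumption \ref{ass:hessian} these facts are immediate, so I do not anticipate any genuine obstacle beyond stating them cleanly.
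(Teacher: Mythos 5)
Your proposal is correct and is essentially the argument the paper relies on: the paper states this lemma with a citation to \cite{tapia1977diagonalized} rather than proving it, but the key computation you use --- implicit differentiation of the stationarity condition to get $\nabla\xb^*(\lb) = -[\nabla_{\xb\xb}^2L(\xb^*(\lb),\lb)]^{-1}W$ --- is exactly what the paper carries out in \eqref{eq:nabla_M} in its proof of Lemma \ref{lem:lip_x_star}, and your envelope-theorem step for the gradient together with the uniform bound $\nabla_{\xb\xb}^2L \succeq sI$ supplies the remaining details cleanly.
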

For the rest of the paper, we focus on developing distributed methods for solving Problem \ref{prob:dual}.

\section{Algorithm}
This section proposes DISH as a distributed hybrid primal-dual algorithmic framework for solving Problem \ref{prob:dual}, which allows choices of gradient-type and Newton-type updates for each agent at each iteration based on their current battery/computation capabilities and provides flexibility to handle and utilize heterogeneity in the network.

\subsection{DISH to Handle and Utilize System Heterogeneity}
Specifically, 
we propose the following hybrid updates. At each iteration $k$,
\#\label{eq:up_pre}
& \xb^{k+1} = \xb^k - AP^k\nabla_{\xb}L(\xb^k,\lb^k), \notag \\
& \lb^{k+1}=\lb^k + BQ^k\nabla_{\lb} L (\xb^k,\lb^k),
\#
where stepsize matrices $A = \diag\{a_1, \cdots, a_n\}\otimes I_d$ and $B = \diag\{b_1,\cdots,b_n\}\otimes I_d$ consist of personalized stepsizes $a_i$ and $b_i>0$ for $i\in\cN$ and block diagonal update matrices $P^k = \diag\{P_1^k,\cdots,P_n^k\}$ and $Q^k = \diag\{Q_1^k,\cdots,Q_n^k\}$ are composed of positive definite local update matrices $P_i^k$ and $Q_i^k\in\RR^{d\times d}$ for $i\in\cN$. Here are some examples  of possible local update matrices. For the primal updates, we can take
\#\label{eq:primal_p_choice}
&\text{Gradient-type: }P_i^k=I_d; \notag\\
&\text{Newton-type: }P_i^k = (\nabla^2 f_i(x_i^k) + \mu I_d)^{-1}.
\#
As for the dual updates, we can use
\#\label{eq:dual_q_choice}
&\text{Gradient-type: }Q_i^k=I_d; \notag\\
&\text{Newton-type: }Q_i^k = \nabla^2 f_i(x_i^k) + \mu I_d.
\#
As we go through the following sections, we will explain such choices of local update matrices. We remark that as we will show in Theorem \ref{thm:strong}, the analysis of DISH only requires the update matrices $P_i^k$ and $Q_i^k$ to be positive definite. Thus, the agents can take other local updates such as quasi-Newton methods like BFGS \cite{nocedal1999numerical} and the scaled gradient method \cite{bertsekas1989parallel}. These can be future directions. 
Nevertheless, this paper mainly focuses on gradient-type and Newton-type updates. 

By substituting the partial derivatives in \eqref{eq:up_pre}, we can write DISH in a compact form as follows, at iteration $k$,
\#\label{eq:up}
& \xb^{k+1} = \xb^k - AP^k(\nabla f(\xb^k) + W \lb^k + \mu W\xb^k), \notag \\
& \lb^{k+1}=\lb^k + BQ^kW\xb^k,
\#
Based on \eqref{eq:up}, we provide the distributed implementation of DISH in Algorithm \ref{alg:dish}. DISH in Algorithm \ref{alg:dish} consists of a primal (Line \ref{line:primal}) and a dual step (Line \ref{line:dual}) for each agent, where both steps can be either gradient-type or Newton-type based on the agent's choice in each iteration. We note that this is a very flexible framework. An agent may use different types of updates across iterations, and between primal and dual spaces within the same iteration. The primal and dual updates can be computed simultaneously as they both depend on values from the previous iteration.
In the sequel, we provide interpretation on DISH with some specific choices of updates.

\begin{algorithm}[!t] 
    \caption{DISH: Distributed Hybrid Primal-dual Algorithmic Framework for Consensus Optimization}
    \label{alg:dish}
    \begin{algorithmic}[1]
    \State{\textbf{Input:} Initialization $x_i^0,\lambda_i^0\in\RR^d$, stepsizes $a_i,b_i\in \RR^+$ for all $i\in \cN$, and the penalty parameter $\mu$.}  
    \For{$k =  0, \ldots, K-1$} 
        \For{each agent $i\in \cN$ in parallel}
            \State{Send $x_i^{k}$ and $\lambda_i^{k}$ to its neighbors $j$ for $\{i,j\}\in\cE$;}
           \State{\vspace{-0.5cm} \begin{align*}\qquad 
           \quad &x_i^{k+1} = x_i^k - a_iP_i^k\bigg\{\nabla f_i(x_i^k) + (1-z_{ii})\lambda_i^k \\
&           -\sum_{\{j,i\}\in\cE}z_{ji}\lambda_{j}^k + \mu\big[(1-z_{ii})x_i^k  -\sum_{\{i,j\}\in\cE}z_{ij}x_{j}^k\big]\bigg\};\end{align*} \label{line:primal}}
           \State{$\lambda_i^{k+1} = \lambda_i^k + b_iQ_i^k\big[(1-z_{ii})x_i^k - \sum_{\{i,j\}\in\cE}z_{ij}x_{j}^k\big]$;}\label{line:dual}
        \EndFor
    \EndFor
\end{algorithmic}
\end{algorithm}

\subsection{Relation of DISH to Existing Methods}\label{sec:other_methods}
Now we illustrate how the introduced DISH algorithm is related to some other distributed optimization methods.
\subsubsection{Primal-Dual Gradient-type Method (EXTRA, DIGing, and \cite{qu2017harnessing})}
When all agents in the  network perform primal and  dual  gradient-type  updates, that is, $P^k=Q^k=I_{nd}$, the compact form \eqref{eq:up} of Algorithm \ref{alg:dish} at iteration $k$ is as follows,
\#\label{eq:up_1st}
&\xb^{k+1} = \xb^k - A(\nabla f(\xb^k) + W \lb^k + \mu W\xb^k),  \notag \\
&\lb^{k+1} =\lb^k + BW\xb^k.
\#
This recovers the Arrow-Hurwicz-Uzawa method \cite{arrow1958h}. For convenience, we refer to updates in \eqref{eq:up_1st} as DISH-G. We remark that some exact distributed first-order methods with gradient tracking techniques, such as EXTRA \cite{shi2015extra}, DIGing \cite{nedic2017achieving}, and \cite{qu2017harnessing}, are also equivalent to primal-dual gradient-type methods similar to DISH-G \cite{jakovetic2018unification}. The only difference between these methods and DISH-G occurs in different choices of consensus constraints or penalty terms used in the augmented Lagrangian, or whether the dual step adopts the previous primal variable $x^k$ or the updated $x^{k+1}$ (also referred to as Jacobi or Gauss-Seidel updates).

While such gradient-type primal-dual methods lead to simple distributed implementation, they suffer from slow convergence due to their first-order nature. This motivates us to involve Newton-type updates in DISH as a speedup. 

\subsubsection{Primal-Newton-Dual-Gradient Method (ESOM  \cite{mokhtari2016decentralized})} 
ESOM is a second-order method that each iteration approximates a Newton's step by an inner loop in the primal space and a gradient ascent step in the dual space. ESOM-0 is a variant of ESOM without the primal inner loop and can be viewed as a special case of DISH with a different choice of positive definite update matrix. In particular, we define $P^k_{\text{ESOM}}= \nabla^2 f(\xb^k) + \mu (I_{n} - \diag(Z))\otimes I_d$, which is a diagonal approximation of the primal Hessian $\nabla_{\xb\xb}^2L_{\mu}(\xb^k,\lb^k)= \nabla^2{f}(\xb^k) + \mu W$ when $\mu>0$ and exact when $\mu=0$.  When all agents in DISH perform primal Newton-type and dual gradient-type updates with $P^k = P^k_{\text{ESOM}}$ and $Q^k = I_{nd}$, respectively, the updates of DISH in \eqref{eq:up} at iteration $k$ is
\$
&\xb^{k+1} = \xb^k - AP^k_{\text{ESOM}}(\nabla f(\xb^k) + W \lb^k + \mu W\xb^k),  \notag \\
&\lb^{k+1} =\lb^k + BW\xb^k,
\$
which coincide with ESOM-0. Other variants of ESOM iteratively approximates the off-diagonal parts of the primal Hessian. ESOM enjoys the speedup brought by the primal Hessian's information. Our numerical study shows that DISH with Newton-type updates in both primal and dual spaces can further benefit from the dual Hessian's information.

\subsection{DISH-N as an Approximated Newton's Method} 
Now we take a close inspection of DISH when all agents in the network always take both primal and dual Newton-type updates. In this case, DISH in \eqref{eq:up} shows as follows,
\#\label{eq:up_2nd}
& \xb^{k+1} = \xb^k - {A}(H^k)^{-1}(\nabla f(\xb^k) + W \lb^k + \mu W\xb^k), \notag \\
& \lb^{k+1}=\lb^k + BH^kW{\xb}^k,
\#
where $H^k = \nabla^2 f(\xb^k) + \mu I_{nd}= \diag\{\nabla^2 f_1(x_1^k) + \mu I_{d},\cdots,\nabla^2 f_n(x_n^k) + \mu I_{d}\}$ approximates the primal Hessian $\nabla_{\xb\xb}^2L_{\mu}(\xb^k,\lb^k)$. We will refer to updates in \eqref{eq:up_2nd} as DISH-N. In the sequel, we present that DISH-N can be viewed as an approximation of a Newton's method that takes Newton's steps in both the primal and the dual spaces.
\vskip4pt
\noindent\textbf{Primal Update.} 
The primal Newton's step for solving the inner problem $\min_{\xb}L(\xb, \lb^k)$ in \eqref{prob:dual} at iteration $k$ is as follows,
\#\label{eq:up_2nd_app_p}
\xb^{k+1} = \xb^k -\big(\nabla_{\xb\xb}^2L(\xb^k,\lb^k)\big)^{-1}\nabla_{\xb}{L}(\xb^k,\lb^k).
\#
We note that when $\mu=0$, the primal update in \eqref{eq:up_2nd} coincides with the exact Newton's step in \eqref{eq:up_2nd_app_p}. As when $\mu>0$, the primal Hessian $\nabla_{\xb\xb}^2L(\xb^k,\lb^k) = \nabla^2 f (\xb^k) + \mu W$ is nonseparable due to the penalty term $\mu W$, which makes it difficult to compute the exact Hessian inverse  in a distributed way. Here we approximate $W= (I_n-Z)\otimes I_d$ by the identity $I_{nd}$. We remark that $(I_n-\diag(Z))\otimes I_d$ used in $P^k_{\text{ESOM}}$ is another approximation of $W$. Adopting either of them is guaranteed to provide linear convergence rate by Theorem \ref{thm:strong}. 
By substituting $H^k = \nabla^2 f(\xb^k) + \mu I_{nd}$ in \eqref{eq:up_2nd_app_p}, we obtain the Newton-type primal update in \eqref{eq:up_2nd}.

\vskip4pt
\noindent\textbf{Dual Update.} 
Now we consider the dual Newton's update for $\max_{\lb} g(\lb)$ in \eqref{prob:dual} at iteration $k$. Since we cannot get the exact primal minimizer $\xb^*(\lb^k)$ used in $\nabla g(\lb^k)$ and $\nabla^2 g(\lb^k)$ by Lemma \ref{lem:dual_hessian}, we replace $\xb^*(\lb^k)$ by the current primal iterate $\xb^k$ and define $\hat \nabla g(\lb^k)$ and $\hat \nabla^2 g(\lb^k)$ as estimators of $\nabla g(\lb^k)$ and $\nabla^2 g(\lb^k)$, respectively, as follows, 
\$
\hat \nabla g(\lb^k) = W\xb^k, \ \hat\nabla^2 g(\lb^k) = -W(\nabla_{\xb\xb}^2L(\xb^k,\lb^k))^{-1}W.
\$
We remark that $\hat\nabla^2 g(\lb^k)$ is not full-rank due to the matrix $W$.
We denote by $\lb^{k+1} = \lb^{k} + \Delta\lb^k$ the dual update at iteration $k$, where $\Delta \lb^k$ is an approximated dual Newton's step satisfying
\#\label{eq:up_2nd_d}
\hat \nabla^2 g(\lb^k) \Delta \lb^k = \hat \nabla g(\lb^k).
\#
We define a Hessian weighted average of local primal variables $y^k$ as follows,
\#\label{eq:y_k}
y^k = \big(\sum_{i\in\cN}\nabla^2f_i(x_i^k) \big)^{-1}\sum_{i\in\cN}\nabla^2f_i(x_i^k) x_i^k.
\#
Now we introduce a lemma to characterize $\Delta \lb^k$ using $y^k$.
\begin{lemma}\label{lem:dual_hessian_app}
Under Assumption \ref{ass:hessian}, with $y^k$ defined in \eqref{eq:y_k}, the dual Newton's step $\Delta \lb^k$ in \eqref{eq:up_2nd_d} satisfies
\$
W \Delta\lb^k & = \nabla_{\xb\xb}^2L(\xb^k,\lb^k)\rbr{\mathds{1}_n\otimes y^k - \xb^k}.
\$
\end{lemma}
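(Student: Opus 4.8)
The plan is to unwind the defining relation \eqref{eq:up_2nd_d} and exploit the rank deficiency of $\hat\nabla^2 g(\lb^k)$. Write $M = \nabla_{\xb\xb}^2 L(\xb^k,\lb^k) = \nabla^2 f(\xb^k) + \mu W$, so that \eqref{eq:up_2nd_d} reads $-W M^{-1} W \Delta\lb^k = W\xb^k$. First I would introduce the auxiliary vector $v = M^{-1} W \Delta\lb^k$, turning the equation into $-Wv = W\xb^k$, i.e. $W(v + \xb^k) = 0$. Since $\text{null}(W) = \text{span}\{\mathds{1}_n\otimes y : y\in\RR^d\}$, this forces $v + \xb^k = \mathds{1}_n\otimes z$ for some $z\in\RR^d$, and hence $W\Delta\lb^k = Mv = M(\mathds{1}_n\otimes z - \xb^k)$. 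This already produces the claimed structural form; the remaining task is to identify $z$ with the Hessian-weighted average $y^k$.

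The key observation is that $W\Delta\lb^k$ lies in $\text{range}(W)$, and because $W$ is symmetric, $\text{range}(W) = \text{null}(W)^\perp$. Consequently $W\Delta\lb^k$ is orthogonal to every $\mathds{1}_n\otimes y$, which gives $(\mathds{1}_n\otimes y)^\intercal M(\mathds{1}_n\otimes z - \xb^k) = 0$ for all $y\in\RR^d$. Since $W$ is symmetric with $W(\mathds{1}_n\otimes y)=0$, the penalty block drops out, leaving $(\mathds{1}_n\otimes y)^\intercal M = (\mathds{1}_n\otimes y)^\intercal \nabla^2 f(\xb^k)$. Expanding this block-diagonal product collapses the orthogonality condition to $y^\intercal \sum_{i\in\cN}\nabla^2 f_i(x_i^k)(z - x_i^k) = 0$ for every $y$, hence $\sum_{i\in\cN}\nabla^2 f_i(x_i^k)(z - x_i^k) = 0$. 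Solving for $z$ recovers exactly $z = \big(\sum_{i}\nabla^2 f_i(x_i^k)\big)^{-1}\sum_i \nabla^2 f_i(x_i^k) x_i^k = y^k$ from \eqref{eq:y_k}, which completes the argument.

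Two points deserve care. Since $M^{-1}$ is positive definite, $\text{null}(\hat\nabla^2 g(\lb^k)) = \text{null}(W)$, so $\Delta\lb^k$ is determined only up to an element of $\text{null}(W)$; I would note that $W\Delta\lb^k$, the only quantity in the claim, is nonetheless well defined, making the identity independent of the chosen solution. I would also record consistency of \eqref{eq:up_2nd_d}, namely $W\xb^k \in \text{range}(W) = \text{range}(W M^{-1} W)$, so a solution exists. The main obstacle is conceptual rather than computational: the equation does not pin down $\Delta\lb^k$ directly, and the whole content lies in recognizing that the undetermined null-space component $z$ is fixed by the range condition on $W\Delta\lb^k$, which is precisely the mechanism that produces the Hessian-weighted average $y^k$.
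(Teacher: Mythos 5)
Your proposal is correct and follows essentially the same route as the paper: both derive $v+\xb^k\in\text{null}(W)=\text{span}\{\mathds{1}_n\otimes y\}$ from the singular Newton system, and both pin down the null-space component by annihilating the equation $W\Delta\lb^k = \nabla^2_{\xb\xb}L(\xb^k,\lb^k)(\mathds{1}_n\otimes y^k-\xb^k)$ against $\mathds{1}_n^\intercal\otimes I_d$ (your orthogonality-to-$\text{null}(W)$ phrasing is the same computation), which kills the $\mu W$ term and yields the Hessian-weighted average. Your added remarks on well-definedness of $W\Delta\lb^k$ and solvability are sound, if not present in the paper.
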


We note that calculating $y^k$ in \eqref{eq:y_k} directly is impractical in a distributed manner since communicating $d\times d$ local Hessians can be prohibitively expensive. Thus, at the $i^{th}$ agent, we estimate $y^k$ by the weighted average of $x_j^k$ from its neighbors in the network, that is, $\sum_{j\in\cN} z_{ij}x_j^k = [(Z\otimes I_d)x^{k}]_{i}$. Such estimators are more accurate when either the local Hessians are similar to each other or the local decision variables $x_i^k$ are close to consensus. This includes the scenarios when the original problem is generated by an empirical risk minimization problem with i.i.d. samples at each node, or when the underlying graph has good algebraic connectivity or when the method is close to its limit point (a consensed point). 

If we substitute $(Z\otimes I_d)\xb^k$ and $H^k=\nabla^2 f(\xb^k) + \mu I_{nd}$ as estimators of $\mathds{1}_n \otimes y^k$ and $\nabla^2_{\xb\xb} L(\xb^k, \lb^k)$ in Lemma \ref{lem:dual_hessian_app}, respectively, we obtain an estimator $\Delta\hat\lb^k$ of $\Delta \lb^k$ satisfying
\$
W\Delta\hat\lb^k = {H}^k((Z\otimes I_d)\xb^k - \xb^k) = -H^k W\xb^k.
\$ 
We highlight that only $W \lb^k$ is used in the primal update in \eqref{eq:up_2nd}. Therefore, we do not need to compute $\lb$ accurately, but rather focus on approximating $W\lb^k$ and $W\Delta\lb^k$ instead.
 In order to ensure that  $W \Delta\lb^k$  lies in the subspace $\text{range}(W)$, we introduce an additional communication round and use $W^2 \Delta \hat \lb^k = -W H^kW\xb^k$ as an estimator of $W \Delta \hat \lb^k$. We remark that such estimation is exact under a complete graph with $Z=\mathds{1}_n^\intercal \mathds{1}_n/n$, and it is more accurate if the underlying graph is a closer-to-complete one with all eigenvalues of $Z$ closer to either $1$ or $0$. Thus, by substituting the estimator $W^2\Delta \hat \lambda^k$, we obtain the dual update, 
\$
W \lb^{k+1} &= W \lb^{k} - \beta_2W^2 \Delta \hat \lb^k \\
& = W \lb^{k} + \beta_2W H^kW\xb^k.
\$
There are multiple equivalent $\lb$ solutions satisfying the above equation, all corresponding to the same primal update. One of these equivalent solutions can be obtained by omitting $W$ on both sides, which leads to the Newton-type dual update in \eqref{eq:up_2nd}. Thus, DISH-N with distributed implementation approximates the primal-dual Newton's method. Previous works \cite{tapia1977diagonalized, bertsekas2014constrained} have shown that the primal-dual Newton's method improves the convergence performance by utilizing second-order information. Thus, DISH-N  convergences efficiently when the approximations are good, i.e., with i.i.d. data distribution among agents for an empirical risk minimization problem and/or a closer-to-complete graph for the underlying topology.

\section{Theoretical Convergence}

In this section, we present the linear convergence rate for DISH in Algorithm \ref{alg:dish}, regardless of agents' choices of gradient-type or Newton-type updates. 

\vskip4pt
\noindent\textbf{Properties of Primal and Dual Functions.} We first show some pivotal properties of the primal function $ L(\cdot,\lb)$.

\begin{lemma}\label{lem:L_property}
Under Assumption \ref{ass:hessian}, for any $\lb\in\RR^{nd}$, $L(\cdot,\lb)$ is $s$-strongly convex and $\ell_L$-Lipschitz smooth with $\ell_L = \ell+2\mu$.
\end{lemma}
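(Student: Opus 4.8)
The plan is to read strong convexity and smoothness directly off the primal Hessian of $L(\cdot,\lb)$. First I would compute $\nabla^2_{\xb\xb}L(\xb,\lb)$ term by term: $f(\xb)$ contributes $\nabla^2 f(\xb)$; the coupling $\lb^\intercal W\xb$ is linear in $\xb$ and contributes nothing; and, since $V=W$ with $W$ symmetric, the penalty $\tfrac{\mu}{2}\xb^\intercal W\xb$ contributes $\mu W$. Hence
\[
\nabla^2_{\xb\xb}L(\xb,\lb) = \nabla^2 f(\xb) + \mu W,
\]
which notably does not depend on $\lb$. Because for a twice-differentiable function $s$-strong convexity and $\ell_L$-Lipschitz smoothness are equivalent to the uniform bounds $sI_{nd}\preceq \nabla^2_{\xb\xb}L \preceq \ell_L I_{nd}$, it suffices to sandwich this matrix appropriately.

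Next I would bound the two summands. Since $\nabla^2 f(\xb)=\diag\{\nabla^2 f_1(x_1),\dots,\nabla^2 f_n(x_n)\}$ is block diagonal, Assumption \ref{ass:hessian} yields $sI_{nd}\preceq \nabla^2 f(\xb)\preceq \ell I_{nd}$ for every $\xb$. For the penalty I would show $0\preceq W\preceq 2I_{nd}$: the eigenvalues of $W=(I_n-Z)\otimes I_d$ are exactly $1-\lambda$ as $\lambda$ ranges over the spectrum of $Z$ (the Kronecker factor $I_d$ only repeats eigenvalues), and the Perron--Frobenius facts recorded after Assumption \ref{ass:consensus} give $\rho(Z)=1$, so every eigenvalue $\lambda$ of $Z$ satisfies $\lambda\in[-1,1]$ and hence $1-\lambda\in[0,2]$. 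Multiplying by $\mu\ge0$ gives $0\preceq\mu W\preceq 2\mu I_{nd}$.

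Finally I would add the two-sided bounds: the lower bounds give $\nabla^2_{\xb\xb}L\succeq sI_{nd}$, proving $s$-strong convexity, while the upper bounds give $\nabla^2_{\xb\xb}L\preceq(\ell+2\mu)I_{nd}=\ell_L I_{nd}$, proving $\ell_L$-Lipschitz smoothness.

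The main (and essentially only nonroutine) obstacle is the upper bound $\rho(W)\le 2$, equivalently $\lambda_{\min}(Z)\ge -1$. Double stochasticity alone does not force this; it rests on $\rho(Z)=1$, which in turn uses aperiodicity and connectedness---Assumption \ref{ass:consensus}(b),(d) together with irreducibility---through Perron--Frobenius to confine the spectrum of $Z$ to $[-1,1]$. The remaining steps are routine, relying only on the invariance of eigenvalues under $\otimes\, I_d$ and on bounds for block-diagonal Hessians passing to the aggregate.
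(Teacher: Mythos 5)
Your proposal is correct and follows essentially the same route as the paper: compute $\nabla^2_{\xb\xb}L(\xb,\lb)=\nabla^2 f(\xb)+\mu W$, bound the block-diagonal part by Assumption \ref{ass:hessian}, and bound the penalty via $0\preceq W\preceq 2I_{nd}$ from $\rho(Z)=1$. The paper merely packages the same Hessian bound through the auxiliary function $\tilde f(\xb)=f(\xb)+\tfrac{\mu}{2}\xb^\intercal W\xb$ (so it can be reused for the conjugate argument in Lemma \ref{lem:g}), which is an organizational rather than a mathematical difference.
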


Now we show the properties of the dual function $g(\lb)$. In particular, we denote by $\Lambda^{\textsf{OPT}}$ the dual optimal set to Problem \ref{prob:dual}, that is, for any $\lb^*\in\Lambda^{\textsf{OPT}}$, we have $g(\lb^*) = \max_{\lb} g(\lb)$.
The next lemma shows the properties of $g$.
\begin{lemma}\label{lem:g}
Under Assumption \ref{ass:hessian}, the dual function $g(\cdot)$ is $\ell_g$-Lipschitz smooth and it satisfies the PL inequality  \cite{karimi2016linear} with $\lb^*\in\Lambda^{\textsf{OPT}}$ that
\$
g(\lb^*) - g(\lb) \le \frac{1}{2p_g}\|\nabla g(\lb)\|^2,
\$
where constants $p_g = (1-\gamma)/(\ell + 2\mu)$ and $\ell_g =4/s$.
\end{lemma}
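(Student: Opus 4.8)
The plan is to read both assertions off the closed-form dual Hessian $\nabla^2 g(\lb) = -W\bigl(\nabla_{\xb\xb}^2 L(\xb^*(\lb),\lb)\bigr)^{-1}W$ furnished by Lemma~\ref{lem:dual_hessian}, and to combine it with two ingredients: the primal curvature bounds $s I_{nd}\preceq \nabla_{\xb\xb}^2 L\preceq \ell_L I_{nd}$ from Lemma~\ref{lem:L_property} (equivalently $\frac{1}{\ell_L}I_{nd}\preceq(\nabla_{\xb\xb}^2 L)^{-1}\preceq \frac1s I_{nd}$ with $\ell_L=\ell+2\mu$), and the spectral data of $W=(I_n-Z)\otimes I_d$, namely $\|W\|=\rho(W)\le 2$ (the eigenvalues of the doubly stochastic $Z$ lie in $[-1,1]$, so those of $I_n-Z$ lie in $[0,2]$), together with $\sigma_{\min}^+(W)=1-\gamma$ and $\text{null}(W)=\text{span}\{\mathds{1}_n\otimes y\}$.

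For the smoothness claim, write $M=\nabla_{\xb\xb}^2 L$. Concavity of $g$ gives $0\preceq -\nabla^2 g(\lb)=WM^{-1}W$, and using $M^{-1}\preceq \frac1s I_{nd}$ with $\|W\|\le 2$ I would bound, uniformly in $\lb$, $\|\nabla^2 g(\lb)\|=\|WM^{-1}W\|\le \frac1s\|W\|^2\le \frac4s$. Hence the convex function $-g$ has its Hessian bounded by $\frac4s I_{nd}$, so $g$ is $\ell_g$-Lipschitz smooth with $\ell_g=4/s$. This step is routine.

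For the PL inequality I would argue through restricted strong concavity. First I would record that $\xb^*(\lb)$, and therefore $g(\lb)$ and $\nabla g(\lb)=W\xb^*(\lb)\in\text{range}(W)$, depend on $\lb$ only through $W\lb$ (this is visible from the stationarity condition $\nabla f(\xb^*)+W\lb+\mu W\xb^*=0$); consequently $g$ is invariant under shifts by $\text{null}(W)$ and it suffices to analyze $g$ on $\text{range}(W)$. On that subspace I would establish strong concavity: for $v\in\text{range}(W)=(\text{null}\,W)^\perp$, $v^\intercal(-\nabla^2 g)v=v^\intercal WM^{-1}Wv\ge \frac1{\ell_L}\|Wv\|^2\ge \frac{(\sigma_{\min}^+(W))^2}{\ell_L}\|v\|^2$, the curvature being strictly positive precisely because of the spectral gap $\sigma_{\min}^+(W)=1-\gamma$. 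I would then convert this into gradient domination in the standard way: from the quadratic upper bound $g(\lb')\le g(\lb)+\nabla g(\lb)^\intercal(\lb'-\lb)-\frac{p_g}{2}\|\lb'-\lb\|^2$, valid for perturbations along $\text{range}(W)$, maximize the right-hand side over the perturbation direction—which stays in $\text{range}(W)$ because $\nabla g(\lb)\in\text{range}(W)$—and evaluate at $\lb'=\lb^*$ to obtain $g(\lb^*)-g(\lb)\le \frac1{2p_g}\|\nabla g(\lb)\|^2$, with $p_g$ controlled by the spectral gap $1-\gamma$ and $\ell_L=\ell+2\mu$.

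The hard part will be the subspace bookkeeping: since $W$ is rank-deficient, $g$ is merely concave—never strongly concave—on all of $\RR^{nd}$, so I must make precise that the gradient, the maximizing perturbation direction, and the relevant component of $\lb^*-\lb$ all live in $\text{range}(W)$, ensuring the null space of $W$ never pollutes the estimate and that the constant carried through the strong-concavity-to-PL conversion is exactly the one governed by $\sigma_{\min}^+(W)=1-\gamma$. (Worth double-checking is the exact power of $1-\gamma$: the bound $\|Wv\|\ge\sigma_{\min}^+(W)\|v\|$ inserts $(\sigma_{\min}^+(W))^2$ into the restricted curvature, so the stated $p_g$ should be reconciled against this factor.) Everything else reduces to the two eigenvalue estimates above.
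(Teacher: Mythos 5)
Your argument is correct, but it is not the route the paper takes. The paper never touches the dual Hessian for this lemma: it writes $g(\lb)=-\tilde f^*(-W\lb)$ for the conjugate $\tilde f^*$ of $\tilde f(\xb)=f(\xb)+\tfrac{\mu}{2}\xb^\intercal W\xb$, derives $1/\ell_L$-strong convexity and $1/s$-smoothness of $\tilde f^*$ from duality of the primal curvature bounds, gets $\ell_g=4/s$ from $\|\nabla g(\lb_1)-\nabla g(\lb_2)\|\le \ell_{\tilde f^*}\rho(W)^2\|\lb_1-\lb_2\|$, and obtains the PL inequality by citing the Karimi--Nutini--Schmidt result for compositions of a strongly convex function with a rank-deficient linear map. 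Your Hessian-based derivation of $\ell_g=4/s$ is equivalent in substance ($\|WM^{-1}W\|\le\|W\|^2/s$ versus the conjugate-smoothness bound), and your restricted-strong-concavity argument for the PL inequality is a self-contained replacement for the citation: it makes explicit exactly the subspace bookkeeping (everything relevant living in $\text{range}(W)$, the affine slice $\lb+\text{range}(W)$ sweeping out all achievable values of $W\lb'$) that the cited result hides. What the paper's route buys is brevity; what yours buys is transparency about where the spectral gap enters.

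The discrepancy you flag in the power of $1-\gamma$ is real and you are right to worry about it. Both your restricted-curvature bound $v^\intercal WM^{-1}Wv\ge \ell_L^{-1}\|Wv\|^2\ge \ell_L^{-1}(1-\gamma)^2\|v\|^2$ for $v\in\text{range}(W)$ and a careful application of the composition result (whose constant is $s_{\tilde f^*}$ times the smallest \emph{positive eigenvalue of $W^\intercal W$}, i.e.\ $(1-\gamma)^2$, not of $W$ itself) yield $p_g=(1-\gamma)^2/(\ell+2\mu)$. The paper's proof writes $p_g=\sigma_{\min}^+(-W)\cdot s_{\tilde f^*}=(1-\gamma)/(\ell+2\mu)$, which is the larger (hence stronger) constant whenever $1-\gamma<1$; this appears to be a slip in the paper rather than a gap in your argument, and it propagates only into the size of the linear rate $\rho$, not into the fact of linear convergence.
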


\vskip4pt
\noindent\textbf{Merit Function.} 
Now we introduce the merit function used in the analysis. We first define two performance metrics, the dual optimality gap and the primal tracking error, as follows, 
\#\label{eq:terrors}
&\Delta_{\lb}^k = g(\lb^*) - g(\lb^k), \notag\\
&\Delta_{\xb}^k = L({\xb}^k,\lb^k) - {L}({\xb}^*(\lb^k),\lb^k),
\#
where $x^*(\lb)$ is defined in \eqref{eq:x_star} and  $\lb^*\in\Lambda^{\textsf{OPT}}$ is a dual optimal point. We remark that $\Delta_{\xb}^k$ and $\Delta_{\lb}^k$ are both nonnegative by definition. Now we define a merit function to be used in the analysis by combing the performance metrics in \eqref{eq:terrors} as
\#\label{eq:merit}
\Delta^k = 9\Delta_{\lb}^{k} + \Delta_{\xb}^{k}.
\#
We remark that $\Delta^k\ge0$ for all $k\ge 0$.
We define $ \xb^{\textsf{OPT}}$ as the optimal solution of Problem \ref{prob:constrained}. The strong duality implies that $\xb^*(\lb^*) =  \xb^{\textsf{OPT}}$ for any $\lb^*\in\Lambda^{\textsf{OPT}}$. We will show that using DISH, $\Delta^k$ converges to zero at a linear rate in Theorem \ref{thm:strong} and therefore, the primal sequence $\xb^k$ goes to the exact optimal solution $\xb^{\textsf{OPT}}$ linearly in Corollary \ref{coro:dish}.

\vskip4pt
\noindent\textbf{Linear Convergence of DISH. } Now we present the theoretical linear convergence of DISH. We first define constants $0<\up_i\le\op_i$ and $0<\uq_i\le\oq_i$ as bounds on the positive definite local update matrices $P_i^k$ and $Q_i^k$ for $i\in\cN$,
\#\label{eq:const_p_q}
\up_i I_d \preceq P_i^k \preceq \op_i I_d, \quad \uq_i I_d \preceq Q_i^k \preceq \oq_i I_d.
\#
Specifically, with the options of gradient-type and Newton-type updates in \eqref{eq:primal_p_choice} and \eqref{eq:dual_q_choice}, we have $\up_i = \min\{1, 1/(\ell_i+\mu)\}$, $\op_i = \max\{1, 1/(m_i+\mu)\}$, $\uq_i = \min\{1, m_i+\mu\}$, and $\oq_i = \max\{1, \ell_i+\mu\}$. For convenience, we also define constants $\underline{\alpha}$ and $\underline{\beta}$ as lower bounds to eigenvalues of matrices $AP^k$ and $BQ^k$, respectively, as follows, 
\#\label{eq:under_a_b}
&\underline{\alpha} = \min_{i\in\cN}\{a_i\up_i\}\le\min_{i\in\cN}\{a_i\theta_{\min}(P^k_i)\} = \theta_{\min}(AP^k),\notag\\
&\underline{\beta} = \min_{i\in\cN}\{b_i\uq_i\}\le\min_{i\in\cN}\{b_i\theta_{\min}(Q_i^k)\} = \theta_{\min}(BQ^k).
\#

Now we show the main theorem stating that DISH in Algorithm \ref{alg:dish} convergence linearly in terms of $\Delta^k$. 
\begin{theorem}[Linear Convergence of DISH]\label{thm:strong}
For any given $\mu \ge 0$, under Assumption \ref{ass:hessian}, we suppose that the stepsizes $\{a_i, b_i\}_{i\in \cN}$ satisfy the following conditions,
\#
&0 < a_i\le1/[2\op_i(s/16 + \ell + 2\mu)],\notag\\
&0 < b_i\le\min\{s/64, \underline{\alpha}s^2/60\} /\oq_i,  \label{eq:steps}
\#
where $\op_i$ and $\oq_i$ are defined in \eqref{eq:const_p_q} and $\underline{\alpha}$ is defined in \eqref{eq:under_a_b}. Then for all $k = 0,1,\cdots, K-1$, the iterates generated from DISH in Algorithm \ref{alg:dish} satisfy
\$
\Delta^{k+1} \le \rbr{1-\rho}\Delta^k,
\$
where $\rho = \min \{(1-\gamma)\underline{\beta}/[9(\ell+4\mu)], s\underline{\alpha}/2 \}$ with $\underline{\beta}$ defined in \eqref{eq:under_a_b} and $\Delta^k$ is the merit function defined in \eqref{eq:merit}. 
\end{theorem}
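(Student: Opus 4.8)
The plan is to use the merit function $\Delta^k = 9\Delta_{\lb}^k + \Delta_{\xb}^k$ as a Lyapunov function and to derive two coupled one-step inequalities, one for the dual optimality gap $\Delta_{\lb}^k$ and one for the primal tracking error $\Delta_{\xb}^k$, whose weighted sum telescopes into the stated contraction. The mechanism generating the coupling is that the dual step uses the \emph{inexact} gradient $W\xb^k$ in place of the true gradient $\nabla g(\lb^k) = W\xb^*(\lb^k)$ from Lemma \ref{lem:dual_hessian}, so the error $W(\xb^k - \xb^*(\lb^k))$ is controlled by $\Delta_{\xb}^k$ via the $s$-strong convexity of $L(\cdot,\lb^k)$ in Lemma \ref{lem:L_property}; conversely, each dual step shifts the primal target $\xb^*(\lb^k)\mapsto\xb^*(\lb^{k+1})$, feeding dual progress back into $\Delta_{\xb}^{k+1}$. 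Throughout I would use $\|W\|\le 2$, the eigenvalue bounds $\theta_{\min}(AP^k)=\underline\alpha$ and $\theta_{\min}(BQ^k)=\underline\beta$ from \eqref{eq:under_a_b}, and two consequences of Lemma \ref{lem:g}: the PL bound $\|\nabla g(\lb)\|^2 \ge 2p_g\,\Delta_{\lb}$ and the smoothness bound $\|\nabla g(\lb)\|^2 \le 2\ell_g\,\Delta_{\lb}$.

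\textbf{Dual inequality.} First I would bound $\Delta_{\lb}^{k+1}$. Since $g$ is concave and $\ell_g$-smooth, the ascent form of the descent lemma gives $g(\lb^{k+1}) \ge g(\lb^k) + \langle \nabla g(\lb^k), BQ^kW\xb^k\rangle - \tfrac{\ell_g}{2}\|BQ^kW\xb^k\|^2$. Writing $W\xb^k = \nabla g(\lb^k) + W(\xb^k-\xb^*(\lb^k))$, the inner product splits into a genuine-ascent part bounded below by $\underline\beta\|\nabla g(\lb^k)\|^2$ and a cross term; absorbing the cross term and the quadratic term by Young's inequality, and using $\|\xb^k-\xb^*(\lb^k)\|^2 \le \tfrac{2}{s}\Delta_{\xb}^k$, leaves $g(\lb^{k+1})-g(\lb^k) \ge \tfrac{\underline\beta}{2}\|\nabla g(\lb^k)\|^2 - c_1 b\,\Delta_{\xb}^k$ once the $b_i$ are small enough that the $O(b^2)$ smoothness terms are dominated. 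Applying the PL bound then yields $\Delta_{\lb}^{k+1} \le (1-\underline\beta p_g)\Delta_{\lb}^k + c_1 b\,\Delta_{\xb}^k$, where the leakage coefficient $c_1 b$ is of the order of the dual stepsize.

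\textbf{Primal inequality.} Next I would bound $\Delta_{\xb}^{k+1} = L(\xb^{k+1},\lb^{k+1}) - g(\lb^{k+1})$. The key simplification is that $L(\xb,\lb)$ is affine in $\lb$, so $L(\xb^{k+1},\lb^{k+1}) - L(\xb^{k+1},\lb^k) = (\lb^{k+1}-\lb^k)^\intercal W\xb^{k+1}$ and $g(\lb^{k+1})-g(\lb^k) = \Delta_{\lb}^k - \Delta_{\lb}^{k+1}$; this decomposes $\Delta_{\xb}^{k+1}$ into the frozen-$\lb^k$ tracking error $L(\xb^{k+1},\lb^k)-g(\lb^k)$ plus explicit coupling terms. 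For the frozen term I treat the primal update as scaled gradient descent on the $s$-strongly-convex, $\ell_L$-smooth function $L(\cdot,\lb^k)$; the descent lemma together with the ceiling $a_i \le 1/[2\op_i(s/16+\ell+2\mu)]$ gives $L(\xb^{k+1},\lb^k)-g(\lb^k) \le (1-s\underline\alpha/2)\Delta_{\xb}^k$ while reserving a negative $\|\nabla_{\xb}L(\xb^k,\lb^k)\|^2$ surplus. The coupling terms $(\lb^{k+1}-\lb^k)^\intercal W\xb^{k+1}$ and $\Delta_{\lb}^{k+1}-\Delta_{\lb}^k$ are $O(b)$ in the dual step; bounding $\|W\xb^k\|^2 \le 4\ell_g\Delta_{\lb}^k + \tfrac{16}{s}\Delta_{\xb}^k$ and applying Young's inequality against the reserved surplus produces $\Delta_{\xb}^{k+1} \le (1-s\underline\alpha/2)\Delta_{\xb}^k + c_2 b\,\Delta_{\lb}^k$.

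\textbf{Combination and main obstacle.} Finally I would form $\Delta^{k+1} = 9\Delta_{\lb}^{k+1} + \Delta_{\xb}^{k+1}$ and collect coefficients. The weight $9$ and the dual ceilings $b_i \le \min\{s/64,\,\underline\alpha s^2/60\}/\oq_i$ are calibrated so that the cross leakages $9c_1 b\,\Delta_{\xb}^k$ and $c_2 b\,\Delta_{\lb}^k$ are each dominated by the diagonal contraction into which they are added, after which the surviving rates are $9\underline\beta p_g$ (reduced by the combination to $(1-\gamma)\underline\beta/[9(\ell+4\mu)]$) and $s\underline\alpha/2$, giving the factor $1-\rho$ with $\rho = \min\{(1-\gamma)\underline\beta/[9(\ell+4\mu)],\, s\underline\alpha/2\}$. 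The main obstacle is precisely this bookkeeping: the two cross-coupling terms must be absorbed simultaneously, which forces a tight quantitative balance among the primal rate $s\underline\alpha/2$, the dual rate $\underline\beta p_g$, the weight $9$, and the stepsize ceilings, so the real difficulty lies in making the numerical constants close rather than in the structural argument. The linear convergence of $\xb^k$ to $\xb^{\textsf{OPT}}$ in Corollary \ref{coro:dish} then follows from $\Delta^k\to 0$ together with strong convexity.
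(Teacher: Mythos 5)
Your proposal is correct and follows essentially the same route as the paper's proof: the same merit function $9\Delta_{\lb}^k+\Delta_{\xb}^k$, the same two coupled one-step inequalities (the paper's Propositions \ref{prop:gL} and \ref{prop:Lg}, derived respectively from the $\ell_g$-smoothness of $g$ with the inexact gradient $W\xb^k$ and from the affine-in-$\lb$ decomposition of $\Delta_{\xb}^{k+1}$ into a frozen-$\lb^k$ descent step plus dual-shift terms), closed by the PL inequality and strong convexity exactly as in the paper's Step 3. The only cosmetic difference is that you convert the gradient-norm quantities $\|\nabla_{\xb}L(\xb^k,\lb^k)\|^2$ and $\|\nabla g(\lb^k)\|^2$ into $\Delta_{\xb}^k$ and $\Delta_{\lb}^k$ inside each coupled inequality, whereas the paper carries them symbolically until the final combination and lets the term $\Delta_{\lb}^k-\Delta_{\lb}^{k+1}$ telescope exactly.
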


\begin{proof}[Proof Sketch of Theorem \ref{thm:strong}]
Now we sketch the proof of Theorem \ref{thm:strong}. Due to the coupled nature of primal and dual updates in DISH in \eqref{eq:up}, our main idea for analyzing the primal-dual framework is to bound the dual optimality gap $\Delta_{\lb}^k$ and the primal tracking error $\Delta_{\xb}^k$ through coupled inequalities. We decompose our analysis into three steps. 

\vskip2pt
\noindent\textit{Step 1: Bounding the Dual Optimality Gap $\Delta_{\lb}^{k+1}$.} We first bound the updated dual optimality gap $\Delta_{\lb}^{k+1}$ with an alternative primal tracking error $\|\nabla_{\xb}L({\xb}^k,\lb^k)\|^2$ using the Lipschitz smoothness of the dual function $g$. For convenience, we define a constant $\beta$ as an upper bound of $\|BQ^k\|$ as follows,
\# \label{eq:beta}
\beta = \max_{i\in\cN}\{b_i\oq_i\}\ge \max_{i\in\cN}\{b_i\|Q_i^k\|\} = \|BQ^k\|.
\#
The following proposition shows the obtained inequality.
\begin{proposition}\label{prop:gL}
Under Assumption \ref{ass:hessian}, given a constant $\mu>0$ and stepsizes $\{a_i,b_i\}_{i\in\cN}>0$, for all $k = 0,1,\cdots,K-1$, the  iterates  generated  from DISH in Algorithm \ref{alg:dish} satisfy
\$
\Delta_{\lb}^{k+1} &\le \Delta_{\lb}^{k} - \rbr{\frac{1}{2}-\beta\ell_g}\norm{\nabla g(\lb^{k})}^2_{BQ^k} \\
& \qquad + \rbr{\frac{1}{2}+\beta\ell_g}\frac{4\beta}{s^2} \norm{\nabla_{\xb}L({\xb}^k,\lb^k)}^2,
\$
where $\beta$ is defined in \eqref{eq:beta} and $\ell_g$ is defined in Lemma \ref{lem:g}.
\end{proposition}

\vskip2pt
\noindent\textit{Step 2: Bounding the Primal Tracking Error $\Delta_{\xb}^{k+1}$.} Next, we derive a bound of the updated primal tracking error $\Delta_{\xb}^{k+1}$ by an alternative dual optimality gap $\nabla g(\lb^k)$. We use the Lipschitz smoothness of $L(\cdot,\lb)$ to show the following result.
\begin{proposition}\label{prop:Lg}
Under Assumption \ref{ass:hessian}, given a constant $\mu>0$ and stepsizes $\{a_i,b_i\}_{i\in\cN}>0$, for all $k = 0,1,\cdots,K-1$, the  iterates  generated  from DISH in Algorithm \ref{alg:dish} satisfy
\$
\Delta_{\xb}^{k+1} & \le  \Delta_{\xb}^k  + 3\|\nabla g(\lb^k)\|^2_{BQ^k} - \|\nabla_{\xb} L(\xb^{k}, \lb^{k})\|_{D^k}^2 \\
&\quad+ \Delta_{\lb}^k - \Delta_{\lb}^{k+1}, 
\$
where matrix $D^k = AP^k - (2\beta+{\ell_L}/{2})A^2(P^k)^{2}-12\beta/ {s^2}I_{nd}$ with $\beta$ and $\ell_L$ defined in \eqref{eq:beta} and Lemma \ref{lem:L_property}, respectively.
\end{proposition} 
\vskip2pt
\noindent\textit{Step 3: Putting Things Together.}
Finally, we take a linear combination of the coupled inequalities in Propositions \ref{prop:gL} and \ref{prop:Lg}. We use the strong convexity of $L(\cdot, \lb)$ and the PL inequality satisfied by $g(\lb)$ in Lemmas \ref{lem:L_property} and \ref{lem:g}. By some  algebraic manipulations, when the stepsizes satisfy \eqref{eq:steps}, we prove the linear convergence of DISH in Theorem \ref{thm:strong}.
\end{proof} 

The following corollary shows that DISH finds the optimal solution $\xb^{\textsf{OPT}}$ at a linear rate.
\begin{corollary}\label{coro:dish}
Under Assumption \ref{ass:hessian}, when the stepsizes $\{a_i, b_i\}_{i\in \cN}$ satisfy conditions in \eqref{eq:steps}, for all $k = 0,\cdots, K-1$, the iterates generated from DISH in Algorithm \ref{alg:dish} satisfy
\$
\|\xb^k - \xb^{\textsf{OPT}}\|^2 \le c(1-\rho)^k,
\$
where the constant $c = 4\ell_L\Delta^0/[s\cdot\min\{\ell_L,9s\}]$ with $\ell_L$ defined in Lemma \ref{lem:L_property}. 
\end{corollary}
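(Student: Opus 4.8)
The plan is to deduce the corollary directly from the geometric decay of the merit function established in Theorem \ref{thm:strong}. Iterating that inequality gives $\Delta^k \le (1-\rho)^k \Delta^0$ for every $k$, so it suffices to control $\|\xb^k - \xb^{\textsf{OPT}}\|^2$ by a fixed multiple of $\Delta^k$. I would obtain this through the triangle-inequality split $\|\xb^k - \xb^{\textsf{OPT}}\|^2 \le 2\|\xb^k - \xb^*(\lb^k)\|^2 + 2\|\xb^*(\lb^k) - \xb^{\textsf{OPT}}\|^2$ and then bound each of the two pieces by one of the two terms making up the merit function $\Delta^k = 9\Delta_{\lb}^k + \Delta_{\xb}^k$ defined in \eqref{eq:merit}.

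First I would handle the primal tracking piece. Since $L(\cdot,\lb^k)$ is $s$-strongly convex by Lemma \ref{lem:L_property} with unique minimizer $\xb^*(\lb^k)$ from \eqref{eq:x_star}, the quadratic-growth bound gives $\Delta_{\xb}^k = L(\xb^k,\lb^k) - L(\xb^*(\lb^k),\lb^k) \ge \frac{s}{2}\|\xb^k - \xb^*(\lb^k)\|^2$, hence $\|\xb^k - \xb^*(\lb^k)\|^2 \le \frac{2}{s}\Delta_{\xb}^k$.

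The more delicate piece is to control $\|\xb^*(\lb^k) - \xb^{\textsf{OPT}}\|^2$ by the dual optimality gap $\Delta_{\lb}^k = g(\lb^*) - g(\lb^k)$ from \eqref{eq:terrors}. The key observation is the identity $L(\xb^{\textsf{OPT}},\lb^k) = g(\lb^*)$: because $\xb^{\textsf{OPT}}$ is feasible for Problem \ref{prob:constrained}, we have $W\xb^{\textsf{OPT}} = 0$ (and, with $V=W$, also the penalty term vanishes), so the linear and augmentation terms of the augmented Lagrangian drop out and $L(\xb^{\textsf{OPT}},\lb^k) = f(\xb^{\textsf{OPT}})$, which by strong duality equals the optimal dual value $g(\lb^*)$. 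Combining this with the $s$-strong convexity of $L(\cdot,\lb^k)$ about its minimizer $\xb^*(\lb^k)$ yields $\Delta_{\lb}^k = L(\xb^{\textsf{OPT}},\lb^k) - L(\xb^*(\lb^k),\lb^k) \ge \frac{s}{2}\|\xb^*(\lb^k) - \xb^{\textsf{OPT}}\|^2$, so $\|\xb^*(\lb^k) - \xb^{\textsf{OPT}}\|^2 \le \frac{2}{s}\Delta_{\lb}^k$. I expect this step to be the main conceptual obstacle: one must recognize that evaluating the Lagrangian at the feasible optimum recovers the optimal value for every dual iterate $\lb^k$, which is exactly what lets the dual gap dominate the distance of the inner minimizer $\xb^*(\lb^k)$ from $\xb^{\textsf{OPT}}$; the rest is bookkeeping.

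Finally I would assemble the pieces. Substituting the two bounds into the triangle inequality gives $\|\xb^k - \xb^{\textsf{OPT}}\|^2 \le \frac{4}{s}(\Delta_{\xb}^k + \Delta_{\lb}^k)$, and since $\Delta_{\lb}^k \ge 0$ implies $\Delta_{\xb}^k + \Delta_{\lb}^k \le \Delta_{\xb}^k + 9\Delta_{\lb}^k = \Delta^k$, I obtain $\|\xb^k - \xb^{\textsf{OPT}}\|^2 \le \frac{4}{s}\Delta^k$. This is already of the claimed form, and because $\min\{\ell_L,9s\} \le \ell_L$ we have $\frac{4}{s} \le \frac{4\ell_L}{s\,\min\{\ell_L,9s\}}$, so the bound holds a fortiori with the stated constant. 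Plugging in $\Delta^k \le (1-\rho)^k \Delta^0$ from Theorem \ref{thm:strong} then gives $\|\xb^k - \xb^{\textsf{OPT}}\|^2 \le c(1-\rho)^k$ with $c = 4\ell_L\Delta^0/[s\,\min\{\ell_L,9s\}]$, completing the proof.
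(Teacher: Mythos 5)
Your proof is correct, and it reaches the conclusion by a genuinely different route for the key step of converting the dual optimality gap into a distance bound. The paper controls $\|\xb^*(\lb^k)-\xb^{\textsf{OPT}}\|$ by writing $g(\lb)=-\tilde f^*(-W\lb)$, invoking the $1/\ell_L$-strong convexity of the conjugate $\tilde f^*$ to get $\Delta_{\lb}^k\ge \tfrac{1}{2\ell_L}\|W\lb^k-W\lb^*\|^2$, and then passing from $W\lb$ to $\xb^*(\lb)$ via the implicit-function-theorem Lipschitz estimate $\|\xb^*(\lb)-\xb^*(\bm{\eta})\|\le \tfrac{1}{s}\|W\lb-W\bm{\eta}\|$ (Lemma \ref{lem:lip_x_star}); chaining these gives $\Delta_{\lb}^k\ge \tfrac{s^2}{2\ell_L}\|\xb^*(\lb^k)-\xb^{\textsf{OPT}}\|^2$. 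You instead observe that feasibility of $\xb^{\textsf{OPT}}$ (i.e., $W\xb^{\textsf{OPT}}=0$) kills both the multiplier and penalty terms, so $L(\xb^{\textsf{OPT}},\lb^k)=f(\xb^{\textsf{OPT}})=g(\lb^*)$ by strong duality, and then the $s$-strong convexity of $L(\cdot,\lb^k)$ about its minimizer $\xb^*(\lb^k)$ gives $\Delta_{\lb}^k = L(\xb^{\textsf{OPT}},\lb^k)-L(\xb^*(\lb^k),\lb^k)\ge \tfrac{s}{2}\|\xb^*(\lb^k)-\xb^{\textsf{OPT}}\|^2$ directly. This is both more elementary (it bypasses Lemmas \ref{lem:penalized_f_conjugate} and \ref{lem:lip_x_star} entirely) and quantitatively sharper, since $\tfrac{s}{2}\ge\tfrac{s^2}{2\ell_L}$: your final bound $\|\xb^k-\xb^{\textsf{OPT}}\|^2\le \tfrac{4}{s}\Delta^k$ dominates the paper's $\tfrac{4\ell_L}{s\min\{\ell_L,9s\}}\Delta^k$, and you correctly note that the stated constant $c$ then holds a fortiori. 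The remaining pieces — the quadratic-growth bound $\Delta_{\xb}^k\ge\tfrac{s}{2}\|\xb^k-\xb^*(\lb^k)\|^2$, the split of $\|\xb^k-\xb^{\textsf{OPT}}\|^2$ into two squared distances, and the geometric decay of $\Delta^k$ from Theorem \ref{thm:strong} — coincide with the paper's argument.
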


Theorem \ref{thm:strong} shows a linear (Q-linear) convergence rate of DISH in terms of the merit function $\Delta^k$, regardless of agents' choices of gradient-type or Newton-type updates. Corollary \ref{coro:dish} guarantees that DISH converges linearly to the exact optimal solution $\xb^{\textsf{OPT}}$. We can also show that the dual sequence goes to the optimal point. We remark that since only constants $\up_i$, $\op_i$, $\uq_i$, $\oq_i$ defined in \eqref{eq:const_p_q} are needed in Theorem \ref{thm:strong}, agents can adopt any local updates as long as the update matrices are positive definite with uniform upper and lower bounds.

The provable linear rate of DISH recovers the linear rate of some existing distributed methods with exact convergence. Such methods include gradient-type methods such as EXTRA \cite{shi2015extra}, DIGing \cite{nedic2017achieving}, and \cite{qu2017harnessing}  and other methods that adopt Newton or quasi-Newton information like PD-QN \cite{eisen2019primal} and ESOM \cite{mokhtari2016decentralized} as discussed in Section \ref{sec:other_methods}. We remark that when all agents take Newton-type updates in both primal and dual spaces,  DISH does not give faster than linear rate. This is due to the distributed approximations made in both primal and dual Newton steps.

The linear rate $1-\rho$ in Theorem \ref{thm:strong} depends on the network structure $\gamma$, objective function properties $\ell$ and $s$, the augmentation penalty $\mu$, and the worst case of update matrices $\underline{\alpha}$ and $\underline{\beta}$. Although the theorem is conservative relying on the worst agents' updates,
as numerical experiments will show in Section \ref{sect:experiments}, DISH can achieve faster performance when more agents adopt Newton-type updates since the local information is more fully utilized.


\section{Numerical Experiments}\label{sect:experiments}
In this section, we present numerical studies of DISH on convex distributed  empirical risk minimization problems including linear least squares and binary classifications. All the experiments are conducted on 3.30GHz Intel Core i9 CPUs, Ubuntu 20.04.2, in Python 3.8.5. Our code is publicly available at \href{https://github.com/xiaochunniu/DISH}{https://github.com/xiaochunniu/DISH}.

\vskip4pt
\noindent\textbf{Experimental Setups.}
We evaluate all methods on two setups, both with synthetic data. In each setup, the underlying network is randomly generated by the Erd\H{o}s-R\'{e}nyi model with $n$ nodes (agents) and probability $p$ to generate each edge. We denote by $\delta_i$ the degree of node $i$ and $\delta_{\max} = \max_{i\in\cN}\{\delta_i\}$ the largest degree of the network. We define the elements of the consensus matrix $Z$ as $z_{ij} = 1/(\delta_{\max}+1)$ for $\{i,j\}\in\cE$, $z_{ii} = 1-\delta_i/(\delta_{\max}+1)$ for $i\in\cN$, and $z_{ij}=0$ otherwise. In each setup, the decision variable is $d$-dimensional and there are total amount $N=\sum_{i\in\cN}N_i$ of data in the network with the local dataset size $N_i$ at agent $i$. Here are more details of the setups. 

\vskip2pt
\noindent\textit{Setup 1: Decentralized Linear Least Squares over a Random Graph.} We first consider the decentralized regularized linear least squares problem as follows,
\$
\min_{\omega\in\RR^d} \frac{1}{2N}\sum_{i=1}^n  \norm{A_{i} \omega - y_{i}}^2 + \frac{\rho}{2}\norm{\omega}^2,
\$
where $A_{i}\in \RR^{N_i\times d}$ and $y_{i}\in \RR^{N_i}$ are the feature matrix and the response vector at agent $i$, respectively, and $\rho\ge0$ is the penalty parameter. Specifically, we set $n=10$, $p=0.7$, $d=5$, $N_i= 50$ for all $i\in\cN$, and $\rho=1$. We generate matrices $\hat A_i\in\RR^{50\times 5}$, noise vectors $v_i\in\RR^{50}$ for $i\in\cN$, and a vector $\omega_0\in\RR^{5}$ from standard Normal distributions. We set feature matrices $A_i=\hat A_i \Theta$, where $\Theta=\diag\{10, 10, 0.1, 0.1, 0.1\}\in\RR^{5\times 5}$ is the scaling matrix. 
We generate the response vector $y_i\in\RR^{50}$ by the formula
$
y_i = A_i \omega_0 + v_i$ for $i\in\cN$.

\vskip2pt
\noindent\textit{Setup 2: Decentralized Logistic Regression over a Random Graph.} 
The second setup studies the regularized logistic regression model for solving binary classification problems
\$
\min_{\omega\in\RR^d}\frac{1}{N} \sum_{i=1}^n  \big[- y_i^\intercal \log h_i - (1-y_i)^\intercal\log(1-h_i)\big] + \frac{\rho}{2}\|\omega\|^2,
\$
where $h_i= 1/(1+\exp(-A_i\omega))$, $A_{i}\in \RR^{N_i\times d}$, and $y_{i}\in \RR^{N_i}$ are the known feature matrix and label vector at agent $i$, respectively, and $\rho$ is the penalty parameter.  
Specifically, we set $n=20$, $p=0.5$, $d=3$, $N_i= 50$ for all $i\in\cN$, and $\rho=1$. 
We generate matrices $\hat A_i\in\RR^{50\times 3}$, noise vectors $v_i\in\RR^{50}$ for $i\in\cN$, and a vector $\omega_0\in\RR^{3}$ from Normal distributions. We scale $\hat A_i$ with matrix $\Theta=\diag\{10, 0.1, 0.1\}\in\RR^{3\times 3}$ and set feature matrices to be $A_i=\hat A_i \Theta$.
The response vector $y_i\in\RR^{50}$ is generated by $y_i = \argmax(\text{softmax}(A_i \omega_0 + v_i))$.
The generated underlying networks are shown in Figure \ref{fig:ER}.

\begin{figure}[!t]
\centering
  \subfloat[Least Squares in Setup 1]{%
       \includegraphics[width=0.49\linewidth]{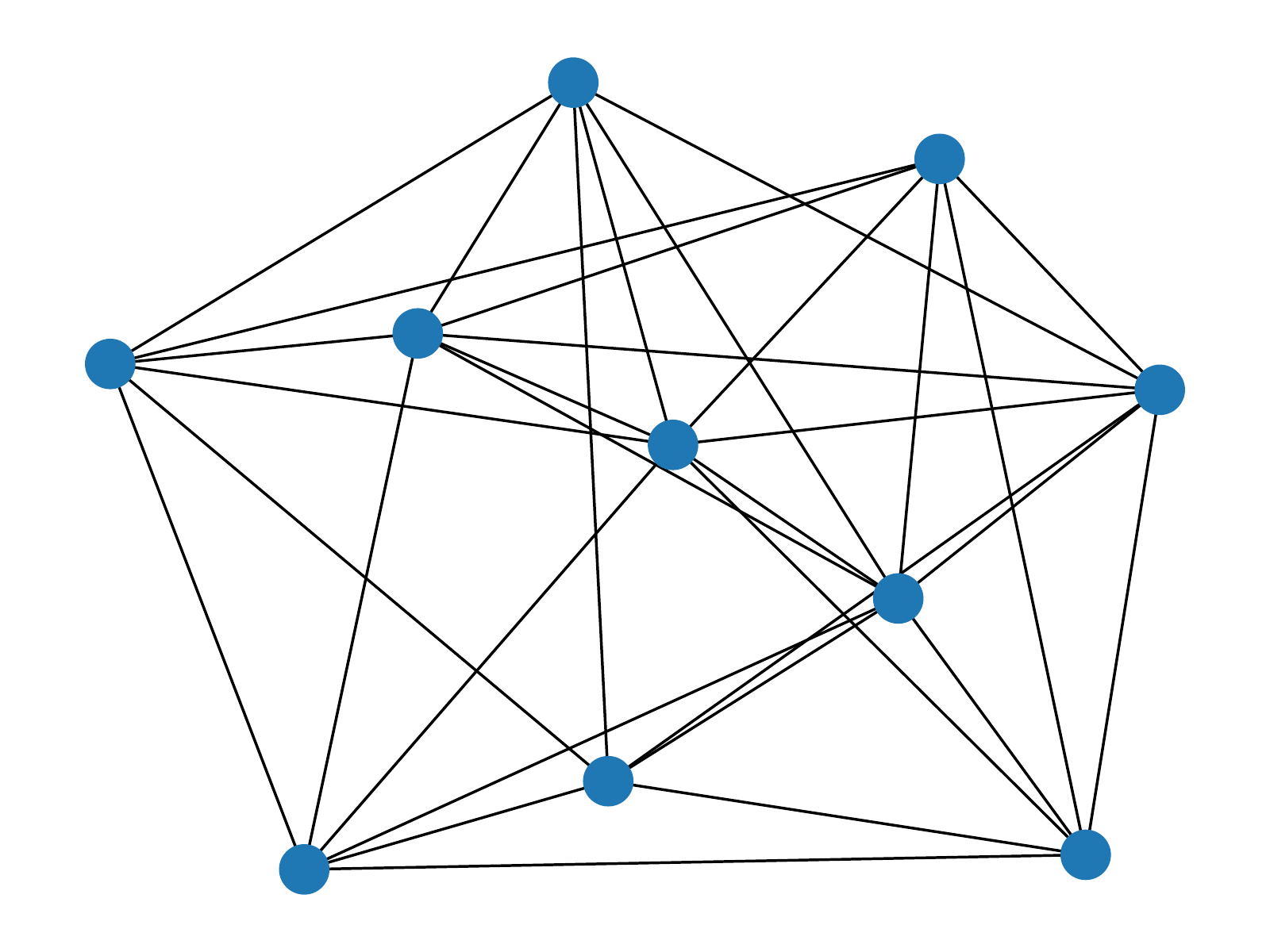}}
    \hfill
  \subfloat[Logistic Regression in Setup 2]{%
        \includegraphics[width=0.49\linewidth]{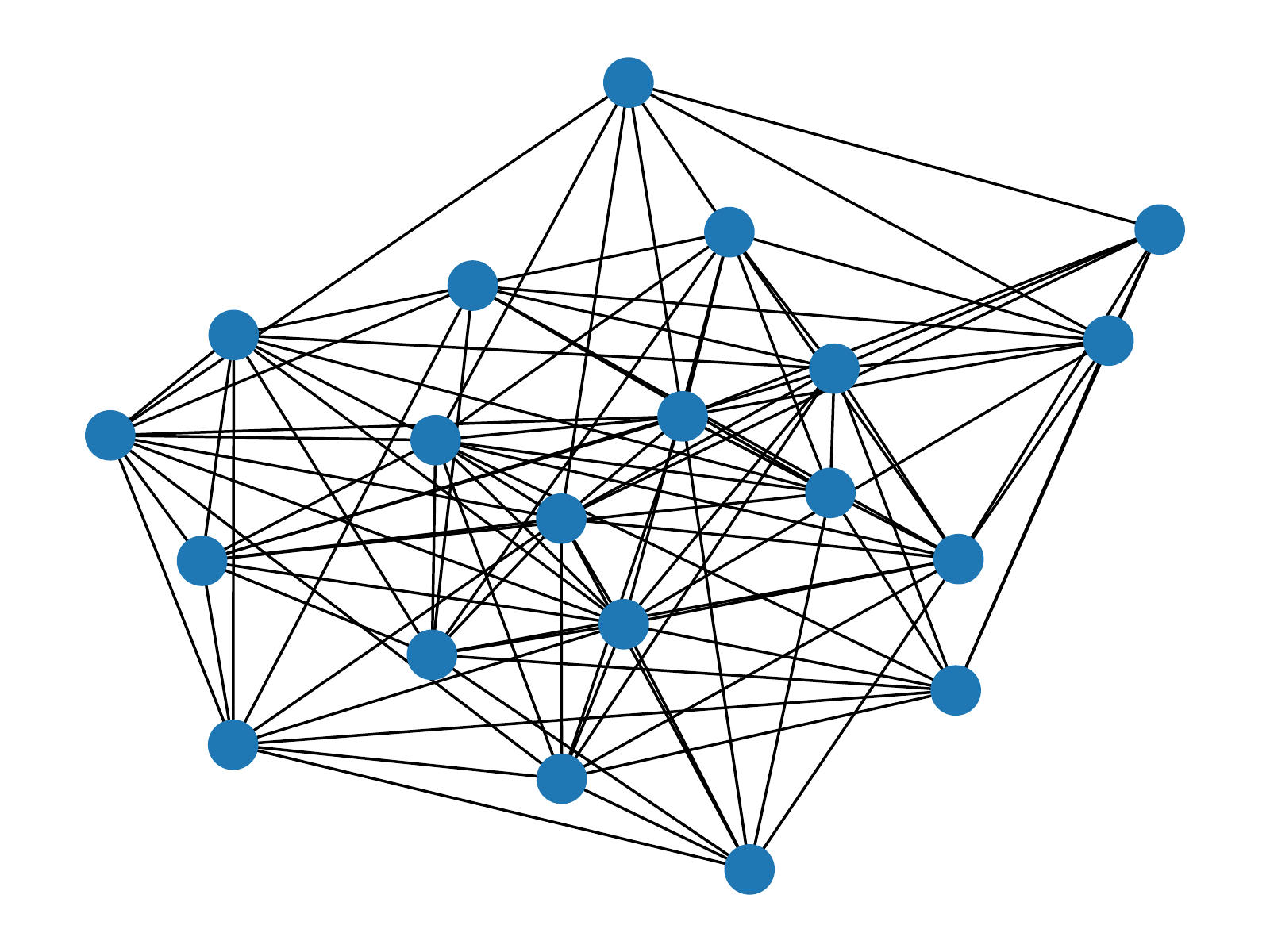}}
     \caption{Underlying Networks.}
     \label{fig:ER}
\end{figure}

\vskip4pt
\noindent\textbf{Implemented Methods.} We implement EXTRA \cite{shi2015extra}, ESOM-$0$ \cite{mokhtari2016decentralized}, and DISH in Algorithm \ref{alg:dish} on the introduced two setups. For convenience, we denote by DISH-$K$ DISH with $K$ agents performing Newton-type updates and the other agents performing gradient-type updates all the time. Moreover, we represent DISH-G\&N as DISH with all agents switching between gradient-type and Newton-type updates once in a while. In particular, for DISH-G\&N-U and DISH-G\&N-LN, we generate $t_i\sim U[5,50]$ and $t_i\sim\text{lognormal}(2,4)+30$, respectively. In both cases, we let agent $i$ change its updates type every $t_i$ iterations with the initial updates uniformly sampled from $\{$`gradient-type', `Newton-type'$\}$.
We remark that all these methods require one communication round with the same communication costs for each iteration independent of the update type.

For all setups and methods, we tune stepsizes and parameters by grid search in the range $[2^{-6}, 2^4]$ and select the optimal ones that minimize the number of iterations to reach a predetermined relative error threshold, measured by $\|\xb^k-\xb^{\textsf{OPT}} \|/\|\xb^0-\xb^{\textsf{OPT}}\|$, where the optimal point $\xb^{\textsf{OPT}}$ is obtained by a centralized solver for Problem \ref{prob:original}. We remark that in DISH we fix $a_i=1$ when agent $i$ takes Newton-type updates to mimic primal Newton's step.

\vskip4pt
\noindent\textbf{Results and Conclusions.} In both Figures \ref{fig:dish_1} and \ref{fig:dish_2}, the $x$-axis shows the number of communication rounds (iterations) and the $y$-axis is the logarithm of the relative error. As shown in Figures \ref{fig:dish_1} and \ref{fig:dish_2}, it is clear that the DISH framework has a linear convergence performance regardless of agents' choice of gradient-type and Newton-type updates, which validates the theoretical guarantees in Theorem \ref{thm:strong}.
\begin{figure}[!t]
\centering
  \subfloat[Least Squares in Setup 1]{%
       \includegraphics[width=0.49\linewidth]{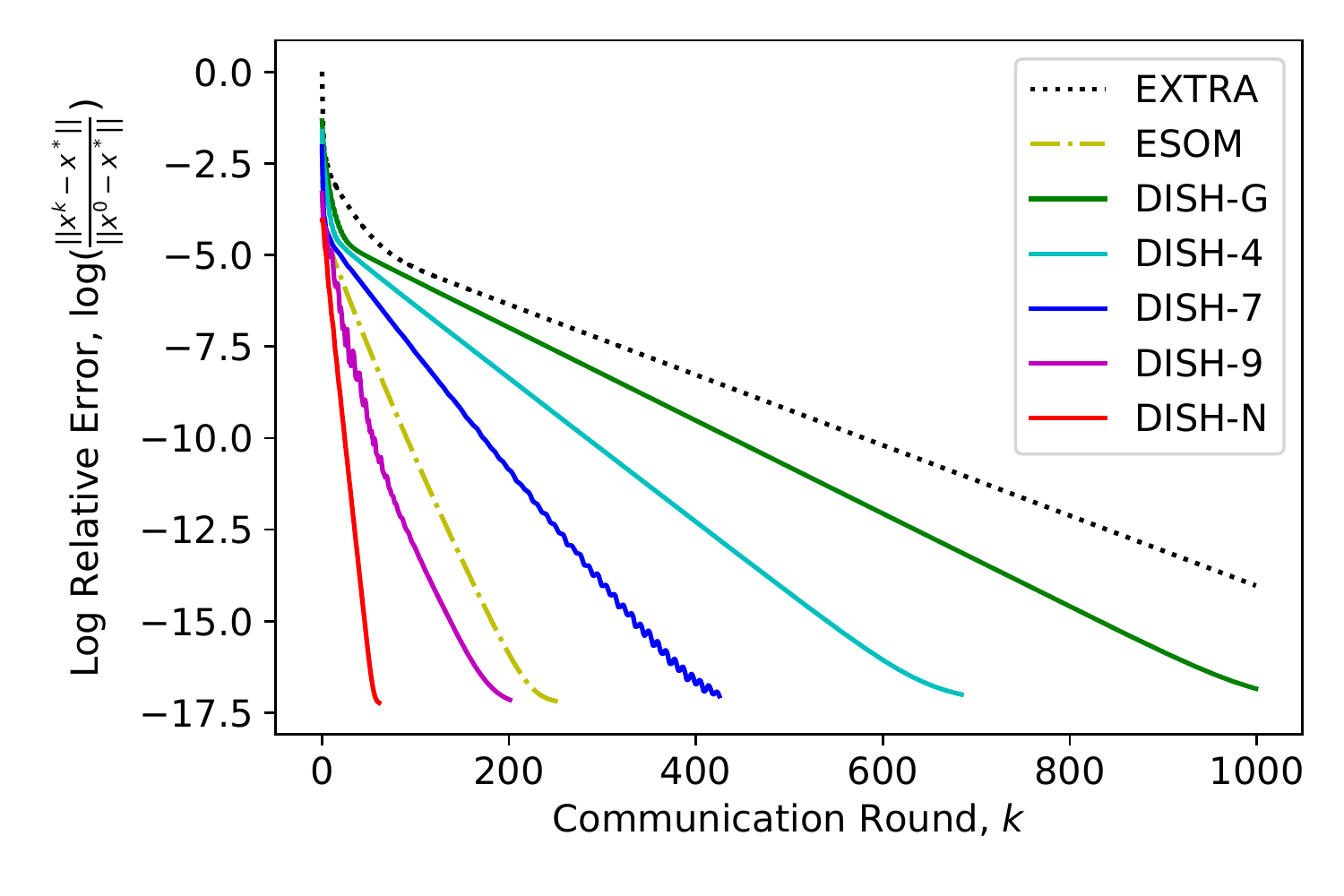}}
    \hfill
  \subfloat[Logistic Regression in Setup 2]{%
        \includegraphics[width=0.49\linewidth]{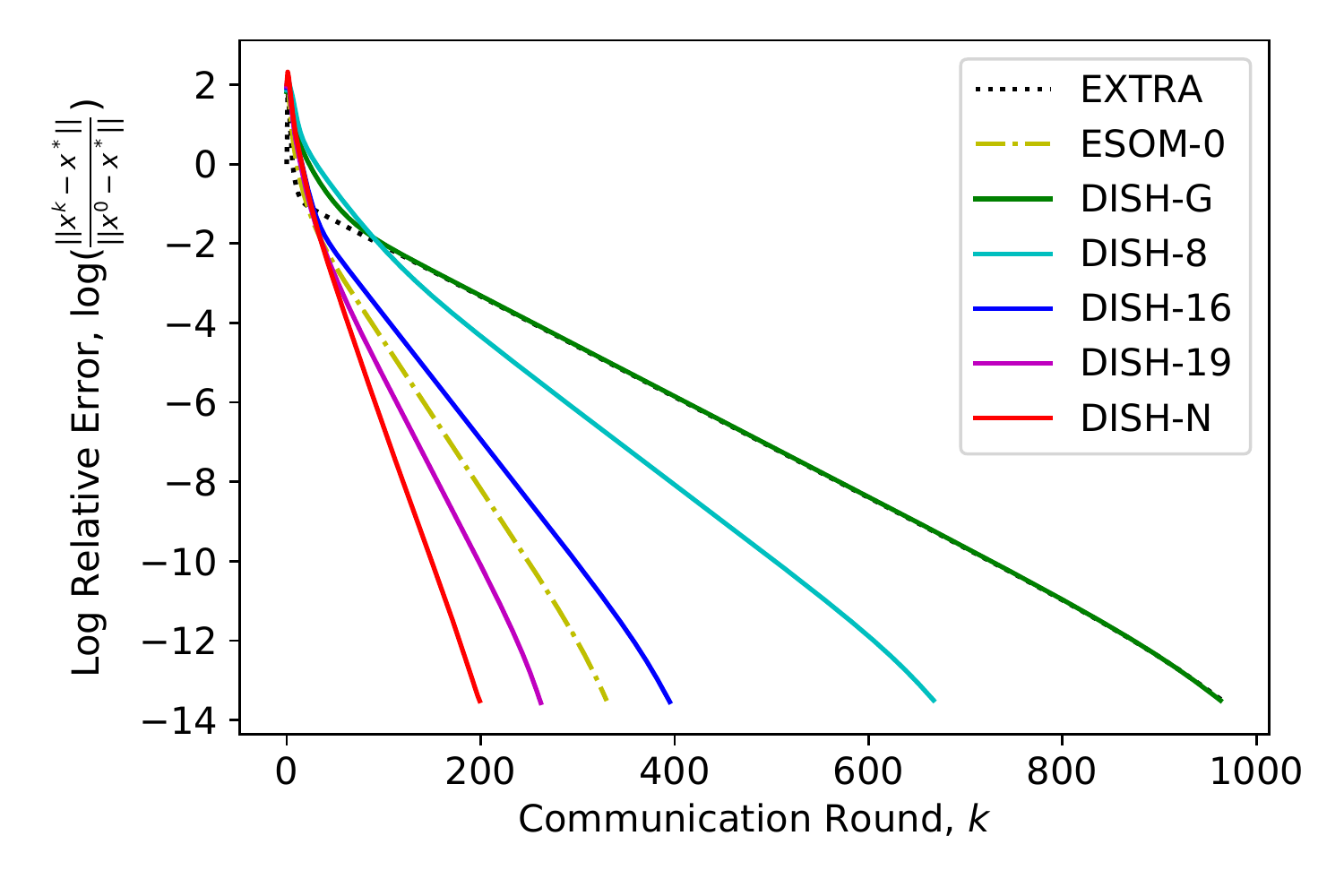}}
     \caption{Performance of EXTRA, ESOM-$0$, and DISH.}
     \label{fig:dish_1}
\end{figure}

\begin{figure}[!t]
\centering
  \subfloat[Least Squares in Setup 1]{%
       \includegraphics[width=0.49\linewidth]{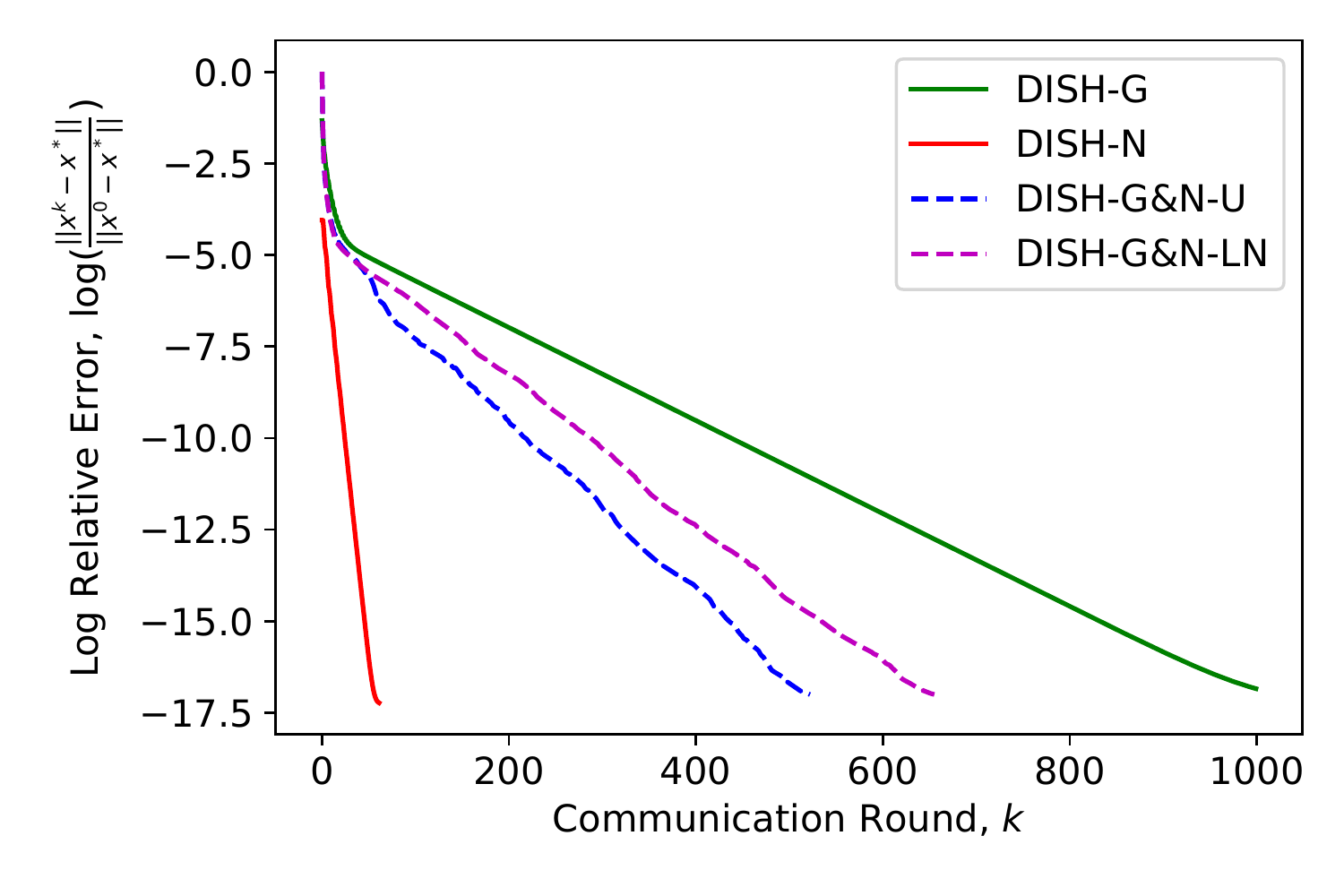}}
    \hfill
  \subfloat[Logistic Regression in Setup 2]{%
        \includegraphics[width=0.49\linewidth]{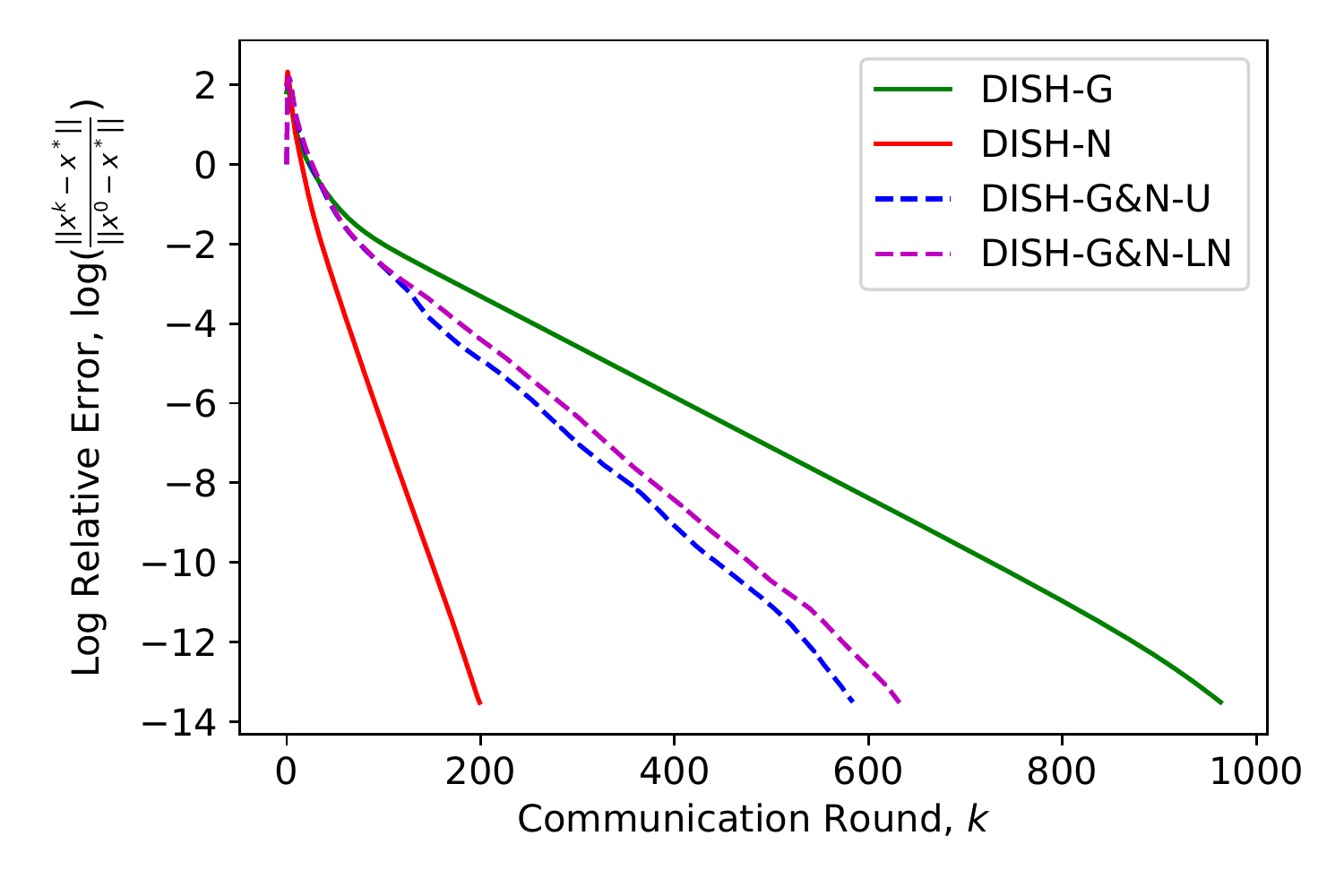}}
     \caption{Performance of DISH-G\&N.}
     \label{fig:dish_2}
\end{figure}

As shown in Figure \ref{fig:dish_1}, primal-dual gradient-type methods, EXTRA and DISH-G, perform similarly due to their similar update formulas. However, when some agents take Newton-type updates, DISH improves the overall training speed and outperforms the baseline method DISH-G consistently. In particular, the second-order DISH-N method outperforms ESOM-$0$ in many scenarios, implying DISH-N benefits from the dual Hessian approximation. 

Moreover, as the number of agents that perform Newton-type updates, $K$, increases, the numerical convergence of DISH is likely to become faster since the Hessian information can be more fully utilized. This observation suggests that in practical systems, those agents with higher computational capabilities and/or cheaper costs to perform computation can choose to take Newton-type updates locally to help speed up the overall convergence of the whole system.

In traditional distributed optimization algorithms, all agents perform the same type of updates. The complexity of the method is determined by the agents equipped with the worst computation hardware. While in DISH, since efficient Newton-type updates are involved at parts of the network, the overall system enjoys a faster convergence speed compared to systems running gradient-type methods only. Therefore, we can maximally leverage the parallel heterogeneous computation capabilities in this setting.

\section{Final Remarks and Future Work}
This paper proposes DISH as a distributed hybrid primal-dual algorithmic framework allowing agents to perform either gradient-type or Newton-type updates based on their computation capacities. We show a linear convergence rate of DISH for strongly convex functions. Numerical studies are provided to demonstrate the efficacy of DISH in practice. 

We highlight a few interesting directions for future works on the DISH framework and distributed optimization. First, we expect DISH to be generalized to broader settings like time-varying graphs or systems with non-convex objective functions. Also, we could involve stochastic methods in DISH. For instance, agents could perform stochastic gradient-type or subsampled Newton-type methods locally. Moreover, we could consider asynchronous updates in DISH, where at each communication round, only a randomly selected subset of the agents take computation steps since it is possible that only a few agents are active in practice.

\bibliographystyle{IEEEtran}
\bibliography{ref}

\appendices
\section{Proof of Lemmas}
\subsection{Proof of Lemma \ref{lem:dual_hessian_app}}
\begin{proof}
By the definition of  $\hat\nabla g(\lb^k)$ and $\hat\nabla^2 g(\lb^k)$, the dual Newton's step in \eqref{eq:up_2nd_d} is as follows, 
\$
-W\rbr{\nabla_{\xb\xb}^2L(\xb^k,\lb^k)}^{-1}W \Delta\lb^k =  W\xb^k.
\$
We note that the null space of the matrix $W$ is $\text{null}(W) = \{\mathds{1}_n\otimes y: y\in\RR^d\}$. Thus, there exists $y^k\in\RR^d$ such that
\$
\rbr{\nabla_{\xb\xb}^2L(\xb^k,\lb^k)}^{-1}W \Delta\hat\lb^k + \xb^k = \mathds{1}_n\otimes y^k.
\$
Rearranging terms in the previous equation, we have 
\#\label{eq:wlambda2}
W \Delta\hat\lb^k & = \nabla_{\xb\xb}^2L(\xb^k,\lb^k)\rbr{\mathds{1}_{n}\otimes y^k - {\xb}^k} \notag \\
& = \rbr{\nabla^2 f({\xb}^k) + \mu W}\rbr{\mathds{1}_{n}\otimes y^k - {\xb}^k}.
\# 
Since $(\mathds{1}_{n}^\intercal\otimes I_d)W = 0$, by multiplying $\mathds{1}_{n}^\intercal\otimes I_d$ on both sides of \eqref{eq:wlambda2}, we have
\$
0 = \rbr{\mathds{1}_{n}^\intercal\otimes I_d}\nabla^2 f(\xb^k)\rbr{\mathds{1}_{n}\otimes y^k - {\xb}^k}.
\$
Thus, we have $(\sum_{i\in\cN} \nabla^2 f_i(x_i^k))y^k =\sum_{i\in\cN} \nabla^2 f_i(x_i^k)x_i^k$. By multiplying the matrix inverse on both sides, we have
\$
y^k = \big(\sum_{i\in\cN}\nabla^2f_i(x_i^k) \big)^{-1}\sum_{i\in\cN}\nabla^2f_i(x_i^k) x_i^k.
\$
Substituting the preceding relation into \eqref{eq:wlambda2}, we conclude the proof of the lemma.
\end{proof}

\subsection{Proof of Lemmas \ref{lem:L_property} and \ref{lem:g}}

For convenience, we first define $\tilde f:\RR^{nd}\to\RR$ as $\tilde f(\xb) = f(\xb) + \mu \xb^{\intercal}W\xb / 2$ and show the following lemma.

\begin{lemma}\label{lem:penalized_f}
Under Assumption \ref{ass:hessian}, eigenvalues of the Hessian $\nabla^2 \tilde f(\cdot)$ are bounded by constants $s\le\ell_L$ as $s I_{nd}\le\nabla^2 \tilde f(\cdot) \le \ell_L I_{nd}$, where $\ell_L = \ell+2\mu$.
\end{lemma}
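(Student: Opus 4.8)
The plan is to exploit the additive structure of the Hessian and bound the two summands separately. First I would differentiate twice: since $\tilde f(\xb) = f(\xb) + \frac{\mu}{2}\xb^{\intercal}W\xb$ with $W = (I_n - Z)\otimes I_d$ a constant symmetric matrix, one obtains $\nabla^2 \tilde f(\xb) = \nabla^2 f(\xb) + \mu W$ for every $\xb \in \RR^{nd}$. The lemma then reduces to sandwiching $\nabla^2 f(\xb)$ and $\mu W$ in the positive-semidefinite order and adding the resulting bounds.

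For the first summand, Assumption \ref{ass:hessian} gives $s_i I_d \preceq \nabla^2 f_i(\cdot) \preceq \ell_i I_d$ for each agent $i$. Because $f$ is separable, $\nabla^2 f(\xb)$ is block diagonal with diagonal blocks $\nabla^2 f_i(x_i)$, so stacking the per-block bounds yields $s I_{nd} \preceq \nabla^2 f(\xb) \preceq \ell I_{nd}$ with $s = \min_{i\in\cN} s_i$ and $\ell = \max_{i\in\cN} \ell_i$, as defined in the preliminaries.

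For the second summand, I would bound the spectrum of $W$ using the spectral properties of $Z$ already recorded earlier. Since $Z$ is symmetric and doubly stochastic, the Perron--Frobenius theorem gives $\rho(Z)=1$, so all eigenvalues of $Z$ lie in $[-1,1]$, and equivalently the eigenvalues of $I_n - Z$ lie in $[0,2]$. Because the Kronecker factor $I_d$ contributes only unit eigenvalues, the same bounds transfer to $W$, giving $0 \preceq W \preceq 2 I_{nd}$. Here the lower bound is the positive semidefiniteness of $W$ already established via $\text{null}(W) = \text{span}\{\mathds{1}_n \otimes y : y \in \RR^d\}$ and $\sigma_{\min}^+(W) = 1-\gamma$, while the upper bound uses $\lambda_{\min}(Z) \ge -1$.

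Finally I would add the two bounds. The lower estimate is $\nabla^2 \tilde f(\xb) \succeq s I_{nd} + \mu\cdot 0 = s I_{nd}$, using only $W \succeq 0$, so the penalty never degrades strong convexity; the upper estimate is $\nabla^2 \tilde f(\xb) \preceq \ell I_{nd} + 2\mu I_{nd} = (\ell+2\mu) I_{nd} = \ell_L I_{nd}$. This yields the claimed $s I_{nd} \preceq \nabla^2 \tilde f(\cdot) \preceq \ell_L I_{nd}$. There is no genuine obstacle here; the only point requiring slight care is the upper spectral bound on $W$, namely confirming $\lambda_{\min}(Z) \ge -1$ rather than merely $\rho(Z)=1$ for the leading eigenvalue, but both follow at once from $Z$ being symmetric doubly stochastic.
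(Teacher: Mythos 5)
Your proposal is correct and follows essentially the same route as the paper: compute $\nabla^2\tilde f(\xb)=\nabla^2 f(\xb)+\mu W$, bound the block-diagonal $\nabla^2 f$ via Assumption \ref{ass:hessian}, and bound $W$ by $0\preceq W\preceq 2I_{nd}$ from $\rho(Z)=1$ and the symmetry of $Z$. The only difference is that you spell out the spectral argument for $W$ in more detail (and correctly write $\mu W$ where the paper's displayed Hessian has a typo reading $\mu W^2$), but the substance is identical.
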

\begin{proof}
By the definition of $\tilde f$ as above, we have
\$\nabla^2 \tilde f(\xb) = \nabla^2f(\xb) + \mu W^2.\$ By the fact that $\rho(Z) = 1$, we have $\rho(W) = \rho(I_d-Z) \le 2$. Under Assumption \ref{ass:hessian}, using $0\preceq W\preceq 2I_{nd}$, we have
\$
sI_{nd}\preceq \nabla^2 \tilde f(\xb) \preceq (\ell+2\mu)I_{nd}.
\$
This concludes the proof of the Lemma.
\end{proof}

Now we prove Lemma \ref{lem:L_property}.
\begin{proof}[Proof of Lemma \ref{lem:L_property}]
We prove the lemma by providing both upper and lower bounds on the partial Hessian $\nabla_{\xb\xb}^2 L(\cdot,\lb)$.
Given any fixed $\lb\in\RR^{nd}$, by taking partial Hessian with respect to $\xb$ of the function $L$ defined in \eqref{eq:lagrangian_func}, we have
\$
\nabla^2_{\xb\xb} L(\xb,\lb) = \nabla^2f(\xb) + \mu W =\nabla^2 \tilde f(\xb).
\$ 
Thus, by Lemma \ref{lem:penalized_f}, with $\ell_L=\ell+2\mu$, we have
\$
sI_{nd}\preceq \nabla^2_{\xb\xb} L(\xb,\lb)=\nabla^2 \tilde f(\xb) \preceq \ell_L I_{nd}.
\$
This concludes the proof of the Lemma.
\end{proof}

Now we denote by $\tilde f^*$ the convex conjugate of $\tilde f$. We observe that $g(\lb) = \min_{{\xb}} \{\tilde f(\xb) + \lb^\intercal W\xb\} = - \tilde f^*(-W\lb)$.
The next lemma shows the properties of $\tilde f^*$.
\begin{lemma}\label{lem:penalized_f_conjugate}
Under Assumption \ref{ass:hessian}, $\tilde f^*$ is $s_{\tilde f^*}$-strongly convex and $\ell_{\tilde f^*}$-Lipschitz smooth with $s_{\tilde f^*} = 1/\ell_L$ and $\ell_{\tilde f^*} = 1/s$.
\end{lemma}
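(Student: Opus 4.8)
The plan is to obtain the claim from the standard Legendre--Fenchel duality between strong convexity and Lipschitz smoothness. Recall that for a closed proper convex function $h$, the following equivalences hold: $h$ is $m$-strongly convex if and only if $h^*$ is $(1/m)$-Lipschitz smooth, and $h$ is $M$-Lipschitz smooth if and only if $h^*$ is $(1/M)$-strongly convex. Lemma \ref{lem:penalized_f} already establishes $sI_{nd}\preceq \nabla^2\tilde f(\cdot)\preceq \ell_L I_{nd}$, so $\tilde f$ is simultaneously $s$-strongly convex and $\ell_L$-Lipschitz smooth. Feeding these two facts through the duality, the $s$-strong convexity of $\tilde f$ gives $(1/s)$-smoothness of $\tilde f^*$, i.e. $\ell_{\tilde f^*}=1/s$, and the $\ell_L$-smoothness of $\tilde f$ gives $(1/\ell_L)$-strong convexity of $\tilde f^*$, i.e. $s_{\tilde f^*}=1/\ell_L$. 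This matches the constants in the statement.

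For a self-contained argument I would instead compute the Hessian of $\tilde f^*$ directly, which is cleaner here because $\tilde f$ is twice differentiable. Since $\tilde f$ is $s$-strongly convex with $\nabla^2\tilde f$ uniformly bounded above, its gradient $\nabla\tilde f$ is a bijection of $\RR^{nd}$, so $\tilde f^*$ is differentiable with $\nabla\tilde f^*(y)=(\nabla\tilde f)^{-1}(y)$. Differentiating the identity $\nabla\tilde f(\nabla\tilde f^*(y))=y$ and invoking the inverse function theorem yields
\[
\nabla^2\tilde f^*(y)=\big[\nabla^2\tilde f(\nabla\tilde f^*(y))\big]^{-1}.
\]
By the monotonicity of matrix inversion on positive definite matrices, the bounds $sI_{nd}\preceq\nabla^2\tilde f\preceq\ell_L I_{nd}$ invert to $(1/\ell_L)I_{nd}\preceq\nabla^2\tilde f^*(y)\preceq(1/s)I_{nd}$ for every $y\in\RR^{nd}$, which is exactly the asserted $(1/\ell_L)$-strong convexity and $(1/s)$-Lipschitz smoothness of $\tilde f^*$.

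The main obstacle in the direct route is justifying that the conjugate is everywhere differentiable with the stated Hessian formula, i.e. that $\nabla\tilde f$ is a global diffeomorphism of $\RR^{nd}$ so that $(\nabla\tilde f)^{-1}$ is well-defined and smooth on all of $\RR^{nd}$. This is where strong convexity is essential: the uniform lower bound $sI_{nd}\preceq\nabla^2\tilde f$ makes $\nabla\tilde f$ a $C^1$ map with everywhere nonsingular, uniformly bounded-below derivative, so Hadamard's global inverse function theorem applies. Once global invertibility is in hand, the eigenvalue inversion step is routine. I would present the duality route as the primary proof (citing a standard convex-analysis reference) and keep the Hessian computation as the transparent verification, since both deliver the same constants $s_{\tilde f^*}=1/\ell_L$ and $\ell_{\tilde f^*}=1/s$.
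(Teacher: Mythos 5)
Your primary argument is exactly the paper's proof: it likewise deduces from Lemma \ref{lem:penalized_f} that $\tilde f$ is $s$-strongly convex and $\ell_L$-smooth and then invokes the standard Legendre--Fenchel duality (citing a convex-analysis reference) to obtain $s_{\tilde f^*}=1/\ell_L$ and $\ell_{\tilde f^*}=1/s$. Your additional direct verification via $\nabla^2\tilde f^*(y)=[\nabla^2\tilde f(\nabla\tilde f^*(y))]^{-1}$ is a correct, self-contained supplement that the paper does not include, but it is not needed.
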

\begin{proof}
Under Assumption \ref{ass:hessian}, since $\tilde f$ is $s$-strongly convex and $\ell_L$-Lipschitz smooth implied by Lemma \ref{lem:penalized_f}, the conjugate $\tilde f^*$ is $s_{\tilde f^*}$-strongly convex and $\ell_{\tilde f^*}$-Lipschitz smooth with constants $s_{\tilde f^*} = 1/\ell_L$ and $\ell_{\tilde f^*} = 1/s$\cite{hiriart2004fundamentals}. 
\end{proof}

We denote by $-\bm{\zeta}^{\textsf{OPT}}$ the unique minimizer of $\tilde f^*$. Now we begin the proof of Lemma \ref{lem:g}.
\begin{proof}[Proof of Lemma \ref{lem:g}]
By the definition of $g$ in \eqref{prob:dual}, we have $g(\lb)= - \tilde f^*(-W\lb)$, where $\tilde f^*$ is the convex conjugate of $\tilde f$. Now we apply properties of $\tilde f^*$ in Lemma \ref{lem:penalized_f_conjugate}.
Since the conjugate $\tilde f^*$ is $s_{\tilde f^*}$-strongly convex, we denote by $-\bm{\zeta}^{\textsf{OPT}}$ its unique minimizer. We first consider the dual optimal set $\Lambda^{\textsf{OPT}}$. Since $\max g(\lb) = -\min\tilde f^*(-W\lb)$, for any $\lb^*\in\Lambda^{\textsf{OPT}}$, we have $g(\lb^*) =-\tilde f^*(-W\lb^*) =-\tilde f^*(-\bm{\zeta}^{\textsf{OPT}})$. Thus, we have
\$
\Lambda^{\textsf{OPT}} = \{\lb^*\colon W\lb^* = \bm{\zeta}^{\textsf{OPT}}\}.
\$

Now we consider the dual function $g(\lb)$. According to \cite{karimi2016linear}, since $g(\lb)= - \tilde f^*(-W\lb)$ and $\tilde f^*$ is $s_{\tilde f^*}$-strongly convex, the function $-g(\lb)$ satisfies the PL inequality that 
\$
g(\lb^*) - g(\lb) \le \frac{1}{2p_g}\|\nabla g(\lb)\|^2,
\$
where $p_g = \sigma_{\min}^+(-W) \cdot s_{\tilde f^*} = (1-\gamma)/(\ell + 2\mu)$.
As for the Lipschitz smoothness, for any $\lb_1,\lb_2\in\RR^{nd}$, by straightforward algebraic manipulations, we have
\$
\|\nabla g(\lb_1) -\nabla g(\lb_2)\| &= \|\nabla_{\lb} \tilde f^*(-W\lb_2) -\nabla_{\lb} \tilde f^*(-W\lb_1)\| \\
&= \|W[\nabla \tilde f^*(-W\lb_2) -\nabla \tilde f^*(-W\lb_1)] \| \\
&\le \ell_{\tilde f^*}\|W\| \|W\lb_1-W\lb_2\|\\
&\le \ell_{\tilde f^*}\rho(W)^2\|\lb_1 - \lb_2\| \\
&= (4/s)\|\lb_1 - \lb_2\|,
\$
where the first inequality is due to the $\ell_{\tilde f^*}$-Lipschitz smoothness of $\tilde f^*$. This concludes the proof of the Lemma.
\end{proof}

\section{Proof of Propositions}
Before presenting the analysis of Propositions \ref{prop:gL} and \ref{prop:Lg}, we first introduce a corollary of the strong convexity of $L(\cdot,\lb)$. 
\begin{lemma}\label{lem:mx}
Under Assumption \ref{ass:hessian}, for all $k = 0,1,\cdots,K-1$, the iterates generated from DISH in Algorithm \ref{alg:dish} satisfy
\$\norm{W{\xb}^k - W{\xb}^*(\lb^k)} \le \frac{2}{s} \norm{\nabla_{\xb}L(\xb^k,\lb^k)}.\$
\end{lemma}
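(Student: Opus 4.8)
The plan is to exploit the fact that $\xb^*(\lb^k)$ is the exact minimizer of the strongly convex inner problem $L(\cdot,\lb^k)$, so its partial gradient vanishes, and then to convert a bound on the raw primal error $\norm{\xb^k - \xb^*(\lb^k)}$ into the $W$-weighted quantity by controlling the operator norm of $W$.

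First I would invoke the first-order optimality condition at the minimizer: since $\xb^*(\lb^k) = \argmin_{\xb} L(\xb,\lb^k)$ by \eqref{eq:x_star}, we have $\nabla_{\xb} L(\xb^*(\lb^k),\lb^k) = 0$. Next I would use the $s$-strong convexity of $L(\cdot,\lb^k)$ established in Lemma \ref{lem:L_property}. Its gradient form (obtained from $\langle \nabla_{\xb} L(\xb^k,\lb^k) - \nabla_{\xb} L(\xb^*(\lb^k),\lb^k), \xb^k - \xb^*(\lb^k)\rangle \ge s\norm{\xb^k - \xb^*(\lb^k)}^2$ together with Cauchy--Schwarz) gives $\norm{\nabla_{\xb} L(\xb^k,\lb^k) - \nabla_{\xb} L(\xb^*(\lb^k),\lb^k)} \ge s\norm{\xb^k - \xb^*(\lb^k)}$. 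Combining with the vanishing gradient yields $\norm{\xb^k - \xb^*(\lb^k)} \le (1/s)\norm{\nabla_{\xb} L(\xb^k,\lb^k)}$.

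Then I would left-multiply by $W$ and pass to operator norms: $\norm{W\xb^k - W\xb^*(\lb^k)} = \norm{W(\xb^k - \xb^*(\lb^k))} \le \norm{W}\cdot\norm{\xb^k - \xb^*(\lb^k)}$. Since $W = (I_n - Z)\otimes I_d$ and the eigenvalues of the symmetric doubly stochastic matrix $Z$ lie in $[-1,1]$, the spectrum of $W$ lies in $[0,2]$, so $\norm{W} = \rho(W) \le 2$ — precisely the spectral bound already used in the proof of Lemma \ref{lem:penalized_f}. Chaining the two inequalities delivers $\norm{W\xb^k - W\xb^*(\lb^k)} \le (2/s)\norm{\nabla_{\xb} L(\xb^k,\lb^k)}$, as claimed.

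There is no substantive obstacle here; the result is a direct consequence of strong convexity together with the spectral estimate on $W$. The only two points requiring care are to use the \emph{gradient} form of strong convexity (the lower bound $\norm{\nabla_{\xb} L} \ge s\norm{\cdot}$) rather than the function-value form, and to carry along the $\rho(W)\le 2$ bound so that the factor $2$ in the stated inequality appears correctly.
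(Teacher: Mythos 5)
Your proposal is correct and follows essentially the same route as the paper's proof: both use the first-order optimality condition $\nabla_{\xb}L(\xb^*(\lb^k),\lb^k)=0$, the gradient form of $s$-strong convexity from Lemma \ref{lem:L_property} to get $\norm{\xb^k-\xb^*(\lb^k)}\le(1/s)\norm{\nabla_{\xb}L(\xb^k,\lb^k)}$, and the spectral bound $\rho(W)\le 2$ to produce the factor of $2$. No gaps.
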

\begin{proof}
Using the fact that $\rho(W)\le 2$, we have
\$
\|W{\xb}^k - W{\xb}^*(\lb^k)\| &\le\|W\| \|\xb^k - \xb^*(\lb^k)\|\notag \\
&\le 2\|\xb^k - \xb^*(\lb^k)\|.
\$
Using the $s$-strong convexity of $L(\cdot,\lb)$ in Lemma \ref{lem:L_property}, we bound the RHS in the above equation by $\|\nabla_{\xb}L({\xb}^k,\lb^k)\|$ as follows,
\$
\|\nabla_{\xb}L({\xb}^k,\lb^k)\| &= \|\nabla_{\xb}L({\xb}^k,\lb^k) - \nabla_{\xb}L({\xb}^*(\lb^k),\lb^k) \| \notag \\
&\ge s \|\xb^k - \xb^*(\lb^k)\|.
\$
Thus, by combing the preceding two equations, we have
\$
\|W{\xb}^k - W{\xb}^*(\lb^k)\| \le \frac{2}{s} \|\nabla_{\xb}L({\xb}^k,\lb^k)\|.
\$
This concludes the proof of the lemma. 
\end{proof}

We also provide the following lemma to bound the dual update $\|\lb^{k+1} - \lb^k\|$ by an alternative primal tracking error $\|\nabla_{\xb}L({\xb}^k,\lb^k)\|$ and the dual gradient $\|\nabla g(\lb^k)\|$.

\begin{lemma}\label{lem:gap_lambda}
Under Assumption \ref{ass:hessian}, for all $k = 0,1,\cdots,K-1$, the iterates generated from DISH in Algorithm \ref{alg:dish} satisfy
\$
&\|W{\xb}^{k}\|^2_{BQ^k}\le\frac{8\beta}{s^2} \|\nabla_{\xb}L({\xb}^k,\lb^k)\|^2 + 2\|\nabla g(\lb^k)\|^2_{BQ^k}, \\
&\|\lb ^{k+1} - \lb ^k\|^2\le \frac{8\beta^2}{s^2} \norm{\nabla_\xb  L({\xb  }^k,\lb ^k)}^2 + 2\beta\norm{\nabla g(\lb ^k)}^2_{BQ^k}.
\$
\end{lemma}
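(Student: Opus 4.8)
The plan is to read both bounds off the dual update rule together with the two facts already available: the dual gradient identity $\nabla g(\lb^k) = W\xb^*(\lb^k)$ from Lemma \ref{lem:dual_hessian}, and the tracking bound $\|W\xb^k - W\xb^*(\lb^k)\| \le (2/s)\|\nabla_\xb L(\xb^k,\lb^k)\|$ from Lemma \ref{lem:mx}. The starting observation is that the dual step in \eqref{eq:up} is exactly $\lb^{k+1} - \lb^k = BQ^k W\xb^k$, so both quantities to be bounded are built from the single vector $W\xb^k$, measured in the $BQ^k$-weighted norm.

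First I would prove the bound on $\|W\xb^k\|^2_{BQ^k}$. The key is the splitting $W\xb^k = (W\xb^k - W\xb^*(\lb^k)) + \nabla g(\lb^k)$, where I have used the gradient identity to replace $W\xb^*(\lb^k)$ by $\nabla g(\lb^k)$. Applying the elementary inequality $\|a+b\|^2_M \le 2\|a\|^2_M + 2\|b\|^2_M$, valid for the positive semidefinite matrix $M = BQ^k$, separates the tracking error from the dual gradient. For the first term I bound the weighted norm by the operator norm, $\|W\xb^k - W\xb^*(\lb^k)\|^2_{BQ^k} \le \beta\|W\xb^k - W\xb^*(\lb^k)\|^2$ with $\beta = \|BQ^k\|$ from \eqref{eq:beta}, and then invoke Lemma \ref{lem:mx} to replace it by $(4/s^2)\|\nabla_\xb L(\xb^k,\lb^k)\|^2$. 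Collecting the factor $2\beta \cdot 4/s^2 = 8\beta/s^2$ gives the first claimed inequality.

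Next I would obtain the bound on $\|\lb^{k+1} - \lb^k\|^2$ by reducing it to the first bound. Since $BQ^k$ is symmetric positive definite (its blocks are $b_i Q_i^k$ with $Q_i^k \succ 0$), we have $\|\lb^{k+1} - \lb^k\|^2 = \|BQ^k W\xb^k\|^2 = (W\xb^k)^\intercal (BQ^k)^2 (W\xb^k)$. The operator inequality $(BQ^k)^2 \preceq \|BQ^k\|\, BQ^k = \beta BQ^k$ --- which holds because each eigenvalue $\mu$ of $BQ^k$ satisfies $\mu^2 \le \beta\mu$ --- turns this into $\beta\|W\xb^k\|^2_{BQ^k}$. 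Substituting the already-established first inequality multiplies each term by $\beta$ and yields $\|\lb^{k+1}-\lb^k\|^2 \le (8\beta^2/s^2)\|\nabla_\xb L(\xb^k,\lb^k)\|^2 + 2\beta\|\nabla g(\lb^k)\|^2_{BQ^k}$, as required.

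The argument is essentially bookkeeping; there is no genuine obstacle. The only points requiring care are the correct handling of the $BQ^k$-weighted norms --- in particular verifying that $BQ^k$ is symmetric so that the weighted norm and the operator inequality $(BQ^k)^2 \preceq \beta BQ^k$ are legitimate --- and keeping the constants consistent between the two inequalities. Everything else follows directly from the dual update formula and the two cited lemmas.
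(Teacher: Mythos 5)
Your proposal is correct and follows essentially the same route as the paper's proof: the same splitting of $W\xb^k$ via the identity $\nabla g(\lb^k)=W\xb^*(\lb^k)$ combined with Lemma \ref{lem:mx}, and the same reduction $\|BQ^kW\xb^k\|^2\le\beta\|W\xb^k\|^2_{BQ^k}$ for the second bound. Your explicit justification of $(BQ^k)^2\preceq\beta\,BQ^k$ via symmetry of the block-diagonal matrix $BQ^k$ is a welcome detail that the paper leaves implicit.
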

\begin{proof}
By the inequality that for any $a,b\in\RR^{nd}$, $2a^\intercal b \le \|a\|^2 + \|b\|^2$, we have
\#\label{eq:wx_bd}
&\norm{W{\xb  }^{k}}^2_{BQ^k} \notag\\
&\quad \le 2\norm{W{\xb  }^{k} - W{\xb  }^*(\lb ^k)}^2_{BQ^k} + 2\norm{W{\xb  }^*(\lb ^k)}^2_{BQ^k}  \notag\\
&\quad= 2\norm{W{\xb  }^{k} - W{\xb  }^*(\lb ^k)}^2_{BQ^k} + 2\norm{\nabla g(\lb ^k)}^2_{BQ^k} \notag\\
&\quad\le\frac{8\beta}{s^2} \norm{\nabla_\xb  L({\xb  }^k,\lb ^k)}^2 + 2\norm{\nabla g(\lb ^k)}^2_{BQ^k},
\#
where the equality follows from Lemma \ref{lem:dual_hessian} and the last inequality is due to \eqref{eq:beta} and Lemma \ref{lem:mx}.
Now by taking the dual update in \eqref{eq:up}, we obtain
\$
\|\lb ^{k+1} - \lb ^k\|^2 
&= \|BQ^kW{\xb  }^k\|^2 \\
& \le \beta\norm{W{\xb  }^{k}}^2_{BQ^k} \\
& \le \frac{8\beta^2}{s^2} \norm{\nabla_\xb  L({\xb  }^k,\lb ^k)}^2 + 2\beta\norm{\nabla g(\lb ^k)}^2_{BQ^k},
\$
where the first inequality is due to \eqref{eq:beta} and the last equality follows from \eqref{eq:wx_bd}. 
\end{proof}

Now we show the proof of Proposition \ref{prop:gL}.

\begin{proof}[Proof of Proposition \ref{prop:gL}]
Using the $\ell_g$-Lipschitz continuity of $\nabla g(\cdot)$ in Lemma \ref{lem:g}, we have \cite{nesterov2018lectures},
\#\label{eq:step1}
&g(\lb^{k+1}) \\
&\quad \ge g(\lb^k) + \inner{\nabla g(\lb^{k})}{\lb^{k+1} - \lb^k} - \frac{\ell_g}{2}\norm{\lb^{k+1} - \lb^k}^2 \notag \\
&\quad = g(\lb^k) + \inner{\nabla g(\lb^{k})}{BQ^kW{\xb}^k} - \frac{\ell_g}{2}\norm{\lb^{k+1} - \lb^k}^2,\notag
\#
where the equality is due to the dual update in \eqref{eq:up}.
Now we consider the second term in \eqref{eq:step1}. By adding and subtracting $\inner{\nabla g(\lb^{k})}{BQ^k\nabla g(\lb^{k})}$ in the inner product, we have 
\#\label{eq:lambdainner}
&\inner{\nabla g(\lb^{k})}{BQ^kW{\xb}^k} \notag\\
&\qquad = \inner{\nabla g(\lb^{k})}{BQ^kW{\xb}^k - BQ^k\nabla g(\lb^{k})} \notag\\
&\qquad \qquad + \inner{\nabla g(\lb^{k})}{BQ^k\nabla g(\lb^{k})} \notag \\
&\qquad \ge -\frac{1}{2}\norm{\nabla g(\lb^{k})}^2_{BQ^k} - \frac{1}{2}\norm{W{\xb}^k - \nabla g(\lb^{k})}^2_{BQ^k}\notag\\
&\qquad \qquad  + \norm{\nabla g(\lb^{k})}^2_{BQ^k} \notag\\
&\qquad = \frac{1}{2}\norm{\nabla g(\lb^{k})}^2_{BQ^k} - \frac{1}{2}\norm{W{\xb}^k - W{\xb}^*(\lb^k)}^2_{BQ^k},  \notag \\
&\qquad \ge \frac{1}{2}\norm{\nabla g(\lb^{k})}^2_{BQ^k} - \frac{2\beta}{s^2}\norm{\nabla_\xb L(\xb^k,\lb^k)}^2,
\#
where the first inequality follows from the inequality that for any $a,b\in\RR^{nd}$, $2a^\intercal b \le \|a\|^2 + \|b\|^2$, the last equality is due to Lemma \ref{lem:dual_hessian}, and the last inequality is due to Lemma \ref{lem:mx}.

By substituting \eqref{eq:lambdainner} and Lemma \ref{lem:gap_lambda} into \eqref{eq:step1}, we have 
\$
g(\lb ^{k+1}) &\ge g(\lb ^k) +\rbr{\frac{1}{2}-\beta\ell_g}\norm{\nabla g(\lb ^{k})}^2_{BQ^k} \notag\\
&\qquad - \rbr{\frac{1}{2}+\beta\ell_g}\frac{4\beta}{s^2} \norm{\nabla_\xb L({\xb }^k,\lb ^k)}^2.
\$
By subtracting the dual optimal $g(\lb ^*)$ and taking negative signs on both sides of the above relation, we finally have
\$
\Delta_{\lb }^{k+1} & \le \Delta_{\lb }^{k} - \rbr{\frac{1}{2}-\beta\ell_g}\norm{\nabla g(\lb ^{k})}^2_{BQ^k} \notag\\
& \qquad + \rbr{\frac{1}{2}+\beta\ell_g}\frac{4\beta}{s^2} \norm{\nabla_\xb L({\xb }^k,\lb ^k)}^2,
\$
where $\Delta_{\lb}^k$ is defined in \eqref{eq:terrors}.
\end{proof}

Next, we prove Proposition \ref{prop:Lg}.

\begin{proof}[Proof of Proposition \ref{prop:Lg}]
By the definition in \eqref{eq:terrors}, the tracking error of the primal update $\Delta_{\xb}^{k+1}$ consists of the following three terms,
\#\label{eq:deltaL}
\Delta_{\xb}^{k+1} &=  L({\xb}^{k+1}, \lb^{k+1}) - L(\xb^*(\lb^{k+1}), \lb^{k+1}) \notag \\
&= \underbrace{L({\xb}^{k+1}, \lb^{k+1}) -L(\xb^{k+1}, \lb^{k})}_{\textstyle \text{term (A)}} \notag\\
& \qquad + \underbrace{L(\xb^{k+1}, \lb^{k}) -L(\xb^*(\lb^{k}), \lb^{k})}_{\textstyle \text{term (B)}}  \notag\\
& \qquad + \underbrace{L(\xb^*(\lb^{k}), \lb^{k}) - L(\xb^*(\lb^{k+1}), \lb^{k+1})}_{\textstyle \text{term (C)}},
\#
where term (A) measures the increase due to the dual update, term (B) represents the updated primal tracking error, and term (C) shows the difference between dual optimality gaps.

In the sequel, we upper bound terms (A)-(C), respectively.

\textit{Term (A).} By the definition of $L$ in \eqref{eq:lagrangian_func}, we have
\#\label{eq:termA}
&L({\xb}^{k+1}, \lb^{k+1}) -L(\xb^{k+1}, \lb^{k}) \notag\\ &\quad=(\lb^{k+1}-\lb^{k})^\intercal W{\xb}^{k+1} \notag\\
&\quad = \rbr{BQ^kW{\xb}^{k}}^\intercal W{\xb}^{k+1} \notag\\
&\quad = \|W\xb^k\|^2_{BQ^k} + \rbr{W\xb^k}^\intercal BQ^k\rbr{W\xb^{k+1} - W{\xb}^{k}} \notag\\
& \quad \le \frac{3}{2}\|W\xb^k\|_{BQ^k}^2 + \frac{1}{2}\|W\xb^{k+1} - W\xb^k\|_{BQ^k}^2, \#
where the inequality follows from the inequality that for any $a,b\in\RR^{nd}$, $2a^\intercal b \le \|a\|^2 + \|b\|^2$. 
We note that the first term in \eqref{eq:termA} can be upper bounded by Lemma \ref{lem:gap_lambda}. Now we upper bound the second term as follows,
\#\label{eq:a1ii}
\|W{\xb}^{k+1} - W{\xb}^{k}\|_{BQ^k}^2
& \le \|BQ^k\| \|W\|^2 \|\xb^{k+1} - \xb^k\|^2 \notag\\
& \le 4\beta \|\xb^{k+1} - \xb^k\|^2 \notag \\
& = 4\beta \|AP^k\nabla_\xb L(\xb^k,\lb^k)\|^2,
\#
where the second inequality follows from \eqref{eq:beta} and $\rho(W) \le 2$ and the equality is due to the primal update in \eqref{eq:up_pre}. 

By substituting Lemma \ref{lem:gap_lambda} and \eqref{eq:a1ii} into \eqref{eq:termA}, we have
\#\label{eq:term_a_1_result}
&L({\xb}^{k+1}, \lb^{k+1}) -L(\xb^{k+1}, \lb^{k}) \notag \\
&\le  \frac{12\beta}{s^2}\|\nabla_{\xb}L({\xb}^k,\lb^k)\|^2 + 3\|\nabla g(\lb^k)\|^2_{BQ^k} \notag \\
&\quad+ 2\beta\|AP^k\nabla_{\xb}L(\xb^k,\lb^k)\|^2.
\#
This provides an upper bound on term (A).

\textit{Term (B).} Using the $\ell_L$-Lipschitz continuity of $\nabla_\xb L(\xb ,\lb^k)$ from Lemma \ref{lem:L_property}, we have
\$
L(\xb ^{k+1}, \lb^{k}) &\le L(\xb ^{k}, \lb^{k}) + \nabla_\xb  L(\xb ^{k}, \lb^{k})^\intercal\rbr{\xb ^{k+1} - \xb ^k} \notag\\
&\quad + \frac{\ell_L}{2}\|\xb ^{k+1} - \xb ^k\|^2 \\
&= L(\xb ^{k}, \lb^{k}) + \frac{\ell_L}{2}\|AP^k\nabla_\xb L(\xb ^k,\lb^k)\|^2 \notag \\
&\quad - \nabla_\xb  L(\xb ^{k}, \lb^{k})^\intercal A P^k\nabla_\xb L(\xb ^k,\lb^k), \notag
\$
where the equality follows from the primal update in \eqref{eq:up_pre}. Subtracting $L(\xb ^{*}(\lb^k), \lb^k)$ on both sides of the preceding relation, we have the following upper bound on term (B):
\#\label{eq:termb}
&L(\xb ^{k+1}, \lb^{k}) - L(\xb ^{*}(\lb^k), \lb^k) \\
&\quad \le \Delta_{\xb }^k -
\|\nabla_\xb  L(\xb ^{k}, \lb^{k})\|^2_{AP^k} + \frac{\ell_L}{2}\|AP^k\nabla_\xb L(\xb ^k,\lb^k)\|^2. \notag
\# 
\textit{Term (C).} Since for any $\lb$, $L(\xb ^*(\lb), \lb) = g(\lb)$, we have
\#\label{eq:termc}
L(\xb ^*(\lb^{k}), \lb^{k}) - L(\xb ^*(\lb^{k+1}), \lb^{k+1}) &= g(\lb^k) - g(\lb^{k+1}) \notag\\
&= \Delta_{\lb}^k - \Delta_{\lb}^{k+1}. 
\#
Finally, by substituting \eqref{eq:term_a_1_result}, \eqref{eq:termb}, and \eqref{eq:termc} into \eqref{eq:deltaL}, we have
\$
\Delta_{\xb }^{k+1} &\le \Delta_{\xb }^k  +3\|\nabla g(\lb^k)\|^2_{BD^k}  +\Delta_{\lb}^k - \Delta_{\lb}^{k+1}\notag\\
&\quad - \nabla_\xb  L(\xb ^{k}, \lb^{k})^\intercal\Big[AP^k - \big(2\beta+\frac{\ell_L}{2}\big)A^2(P^k)^{2} \notag\\
&\qquad-  \frac{12\beta}{s^2}I\Big]\nabla_\xb  L(\xb ^{k}, \lb^{k}).
\$
This concludes the proof of the proposition.
\end{proof}

\section{Proof of Theorem \ref{thm:strong}}
In this section, we prove Theorem \ref{thm:strong}.

\begin{proof}[Proof of Theorem \ref{thm:strong}]
We first note that if dual stepsizes satisfy \eqref{eq:steps}, for $\beta$ defined in \eqref{eq:beta}, we have
\#\label{eq:dual_step_cond}
\beta &= \max_{i\in\cN}\{b_i\oq_i\} \le \min\big\{\frac{1}{16\ell_g}, \frac{\underline{\alpha}s^2}{60} \big\}.
\#
Then if primal stepsizes satisfy \eqref{eq:steps}, using $\beta\le 1/(16\ell_g)$ and $\ell_L = \ell + 2\mu$ defined in Lemma \ref{lem:L_property}, we have
\#\label{eq:primal_step_cond}
\big\|AP^k\big\| &= \max_{i\in\cN}\{ a_i\|P_i^k\|\} \notag\\
&\le \max_{i\in\cN}\{a_i\op_i\} \notag\\
&\le \frac{1}{2[1/(4\ell_g) + \ell_L]} \notag\\
&\le \frac{1}{2(4\beta + \ell_L)}.
\#
Next, we show a bounds on a matrix $M^k = D^k - 16(1 + 2\beta\ell_g)\beta/s^2 I$ related to $D^k$ defined in Proposition \ref{prop:Lg}.
By the definition of $D^k$ in Proposition \ref{prop:Lg}, the matrix $M^k$ satisfies
\# \label{eq:qk1}
M^k & = AP^k - (2\beta+\frac{\ell_L}{2})A^2(P^k)^{2}- \frac{(28 + 32\beta\ell_g)\beta}{s^2}I \notag \\
& \succeq \frac{1}{2}AP^k - (2\beta+\frac{\ell_L}{2})A^2(P^k)^{2} \notag\\
& = \frac{1}{2} A^{\frac{1}{2}}(P^k)^{\frac{1}{2}}\sbr{I- (4\beta+\ell_L)AP^k}A^{\frac{1}{2}}(P^k)^{\frac{1}{2}}, 
\#
where the inequality is due to $16\ell_g\beta<1$ and $\beta\le\underline{\alpha}s^2/60$ in \eqref{eq:dual_step_cond}. Thus, by \eqref{eq:qk1}, the smallest eigenvalue of $M^k$ satisfies
\#\label{eq:bd_qk}
&\theta_{\min}(M^k) \notag\\
&\ \ge \frac{1}{2} \theta_{\min}\big(A^{\frac{1}{2}}(P^k)^{\frac{1}{2}}\sbr{I- (4\beta+\ell_L)AP^k}A^{\frac{1}{2}}(P^k)^{\frac{1}{2}}\big) \notag\\
&\ \ge \frac{1}{2} \sbr{1- \rbr{4\beta+\ell_L}\|AP^k\|} \theta_{\min}\rbr{AP^k} \notag \\
&\ \ge \frac{1}{4} \theta_{\min}\rbr{AP^k} \ge \frac{\underline{\alpha}}{4},
\#
where the third and the last inequalities are due to \eqref{eq:primal_step_cond} and \eqref{eq:under_a_b}, respectively.

Now we combine Propositions \ref{prop:gL} and \ref{prop:Lg} to show the result. By multiplying Proposition \ref{prop:gL} by $8$ and adding Proposition \ref{prop:Lg}, we have
\#\label{eq:step_1_and_2}
9\Delta_{\lb}^{k+1} + \Delta_{\xb}^{k+1} &\le  9\Delta_{\lb}^{k}  - (1-8\beta\ell_g)\|\nabla g(\lb^{k})\|^2_{BQ^k} \notag\\ 
&\quad + \Delta_{\xb}^k - \|\nabla_\xb L(\xb^{k}, \lb^{k})\|^2_{M^k}, 
\#
where $M^k = D^k - 16(1 + 2\beta\ell_g)\beta/s^2 I$. By the PL inequality that $g(\lb)$ satisfies in Lemma \ref{lem:g} with $p_g = (1-\gamma)/(\ell+2\mu)$, we have
\#\label{eq:pl_g}
\|\nabla g(\lb)\|^2 \ge 2p_g\big(g(\lb^*) - g(\lb)\big) = \frac{2(1-\gamma)}{\ell + 2\mu}\Delta_{\lb}^k
\#
Thus, by \eqref{eq:dual_step_cond} that $16\beta\ell_g<1$, we have
\# \label{eq:gap_k} 
(1-8\beta\ell_g)\|\nabla g(\lb^{k})\|^2_{BQ^k} 
&\ge \frac{1}{2}\|\nabla g(\lb^{k})\|^2_{BQ^k} \notag \\
&\ge \frac{1}{2}\theta_{\min}(BQ^k)\|\nabla g(\lambda^{k})\|^2 \notag \\
& \ge \frac{(1-\gamma)\underline{\beta}}{\ell+2\mu}\Delta_{\lb}^k,
\#
where the last inequality follows from \eqref{eq:under_a_b} and \eqref{eq:pl_g}. Similarly, by the $s$-strong convexity of $L(\cdot, \lb^k)$ in Lemma \ref{lem:L_property}, we have
\#\label{eq:s2}
\|\nabla_\xb L(\xb^k, \lb^k)\|^2 &\ge 2s\rbr{L(\xb^k, \lb^k) - L(\xb^*(\lb^k), \lb^k)} \notag\\ 
&= 2s\Delta_\xb^k.
\#
Thus, by the lower bound of $M^k$ in \eqref{eq:bd_qk}, we have
\#\label{eq:l_k}
\|\nabla_\xb L(\xb^{k}, \lb^{k})\|^2_{M^k} &  \ge \theta_{\min}(M^k)\|\nabla_\xb L(\xb^k, \lb^{k})\|^2 \notag \\
&  \ge \frac{\underline{\alpha}}{4}\|\nabla_\xb L(\xb^k, \lb^k)\|^2 \notag \\
&  \ge  \frac{\underline{\alpha}}{2}s\Delta_\xb^k,
\#
where the last inequality follows from \eqref{eq:s2}.

Finally, substituting \eqref{eq:gap_k} and \eqref{eq:l_k} into \eqref{eq:step_1_and_2},  we have
\$
& 9\Delta_{\lb}^{k+1} + \Delta_{\xb}^{k+1} \\
&\quad \le  9\Delta_{\lambda}^{k}  - \frac{(1-\gamma)\underline{\beta}}{\ell+2\mu}\Delta_{\lb}^k + \rbr{1 -\frac{s\underline{\alpha}}{2}}\Delta_{\xb}^k \\
&\quad\le 9\sbr{1-\frac{(1-\gamma)\underline{\beta}}{9(\ell+2\mu)}}\Delta_{\lb}^{k} + \rbr{1 - \frac{s\underline{\alpha}}{2}}\Delta_{\xb}^k \notag \\
&\quad\le \rbr{1-\rho}\rbr{9\Delta_{\lb}^{k} + \Delta_{\xb}^k},
\$
where $\rho = \min \{(1-\gamma)\underline{\beta}/[9(\ell+2\mu)], s\underline{\alpha}/2 \}$. Thus, let $\Delta^k = 9\Delta_{\lb}^{k} + \Delta_{\xb}^k$ as defined in \eqref{eq:merit}, we have
\$
\Delta^{k+1} \le (1-\rho)\Delta^k.
\$
This concludes the proof of our theorem.
\end{proof}

\section{Proof of Corollary \ref{coro:dish}}
The next lemma states the Lipschitz continuity of $\xb^*(\lb)$.
\begin{lemma}\label{lem:lip_x_star} 
Under Assumption \ref{ass:hessian}, for any $\lb,\bm{\eta}\in\RR^{nd}$, we have
\$
\|\xb^*(\lb) - \xb^*(\bm{\eta})\| \le \frac{1}{s}\|W\lb- W\bm{\eta}\|.
\$
\end{lemma}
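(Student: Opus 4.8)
The plan is to characterize $\xb^*(\lb)$ via first-order stationarity and then lean on the strong convexity of the penalized objective $\tilde f$ from Lemma~\ref{lem:penalized_f}. Since $L(\cdot,\lb)$ is strongly convex (Lemma~\ref{lem:L_property}), its unique minimizer $\xb^*(\lb)$ is the zero of its gradient, and with $V=W$ the identity $\nabla_\xb L(\xb,\lb)=\nabla f(\xb)+W\lb+\mu W\xb=\nabla\tilde f(\xb)+W\lb$, read off from \eqref{eq:lagrangian_func} together with the definition $\tilde f(\xb)=f(\xb)+\mu\xb^\intercal W\xb/2$, turns stationarity into $\nabla\tilde f(\xb^*(\lb))=-W\lb$. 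The essential structural point is that the dual variable enters the minimizer only through $W\lb$; this is precisely what makes the target bound scale with $\|W\lb-W\bm{\eta}\|$ rather than $\|\lb-\bm{\eta}\|$.

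First I would write this stationarity identity at both $\lb$ and $\bm{\eta}$ and subtract them to obtain
\$
\nabla\tilde f(\xb^*(\lb))-\nabla\tilde f(\xb^*(\bm{\eta}))=-\rbr{W\lb-W\bm{\eta}}.
\$
Next I would apply the $s$-strong convexity of $\tilde f$ (Lemma~\ref{lem:penalized_f}) in its monotonicity form $\inner{\nabla\tilde f(u)-\nabla\tilde f(v)}{u-v}\ge s\|u-v\|^2$, valid for all $u,v\in\RR^{nd}$, at $u=\xb^*(\lb)$ and $v=\xb^*(\bm{\eta})$. Substituting the subtracted identity into the inner product and bounding the resulting expression from above by the Cauchy--Schwarz inequality yields $s\|\xb^*(\lb)-\xb^*(\bm{\eta})\|^2\le\|W\lb-W\bm{\eta}\|\,\|\xb^*(\lb)-\xb^*(\bm{\eta})\|$. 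Dividing by $\|\xb^*(\lb)-\xb^*(\bm{\eta})\|$ then gives the claimed Lipschitz estimate.

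I do not anticipate a substantive obstacle: this is the textbook fact that the argmin of a strongly convex function depends Lipschitz-continuously on a linear perturbation of its gradient. The only details requiring care are recognizing that the $\lb$-dependence is linear and routed entirely through $W\lb$ (so that no spectral factor of $W$ appears on the right-hand side), and dispatching the degenerate case $\xb^*(\lb)=\xb^*(\bm{\eta})$ separately before dividing, which is immediate since the inequality holds trivially there. Well-definedness and uniqueness of $\xb^*(\cdot)$ needed for the argument are already provided by Lemma~\ref{lem:L_property}.
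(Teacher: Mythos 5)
Your proposal is correct, and it takes a genuinely different route from the paper. The paper differentiates the map $\lb \mapsto \xb^*(\lb)$: it invokes the implicit function theorem on the optimality condition $\nabla_{\xb}L(\xb^*(\lb),\lb)=0$ to get $\nabla\xb^*(\lb) = -[\nabla_{\xb\xb}^2L(\xb^*(\lb),\lb)]^{-1}W$, then applies a mean value theorem to write $\xb^*(\lb)-\xb^*(\bm{\eta})$ as $-[\nabla_{\xb\xb}^2L]^{-1}(W\lb-W\bm{\eta})$ at an intermediate point and bounds the inverse Hessian's norm by $1/s$. You instead subtract the two stationarity identities $\nabla\tilde f(\xb^*(\lb))=-W\lb$ and $\nabla\tilde f(\xb^*(\bm{\eta}))=-W\bm{\eta}$ and apply strong monotonicity of $\nabla\tilde f$ together with Cauchy--Schwarz. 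Both yield the same constant, since strong monotonicity with modulus $s$ and the bound $\nabla^2_{\xb\xb}L \succeq sI$ are two faces of the same assumption. Your argument is the more elementary of the two: it needs only first-order optimality and one differentiation of $\tilde f$, and it sidesteps the mean value step for a vector-valued map (which, as stated in the paper, is slightly informal --- a single intermediate point $\overline{\lb}$ does not literally exist for vector-valued functions, and one would normally pass to an integral form of the remainder). The paper's computation of $\nabla\xb^*(\lb)$ has independent value elsewhere --- it is the same sensitivity formula underlying the dual Hessian expression in Lemma~\ref{lem:dual_hessian} --- but for this lemma in isolation your monotonicity argument is cleaner. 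You also correctly flag the two points that need care: the $\lb$-dependence enters only through $W\lb$ (so no extra factor of $\|W\|$ appears), and the degenerate case $\xb^*(\lb)=\xb^*(\bm{\eta})$ must be dispatched before dividing.
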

\begin{proof}
The result is well-known from properties of implicit functions. For the sake of completeness, we provide the proof here. We first show that for any $\lb\in\RR^{nd}$, we have
\#\label{eq:nabla_M}
\nabla\xb^*(\lb) = -[\nabla_{\xb\xb}^2L(\xb^*(\lb),\lb)]^{-1}W.
\#
Due to the optimality condition of the inner problem that $\nabla_{\xb}L(\xb^*(\lb),\lb) = 0$ for any $\lb\in\RR^{nd}$, by taking derivatives with respect to $\lb$ on both sides, using the chain rule and the implicit function theorem, we have
\$
\nabla_{\xb\xb}^2L(\xb^*(\lb),\lb)\nabla\xb^*(\lb) + \nabla_{\xb\lb}^2L(\xb^*(\lb),\lb)= 0.
\$
By rearranging the terms in the above relation and substituting $\nabla_{\xb\lb}^2L(\xb^*(\lb),\lb)=W$, we have \eqref{eq:nabla_M}. Thus, for any $\lb,\bm{\eta}\in\RR^{nd}$, by the mean value theorem, there exists $\alpha \in [0,1]$ such that $\overline{\lb} = \alpha \lb + (1-\alpha) \bm{\eta}$ satisfies,
\$
\xb^*(\lb) - \xb^*(\bm{\eta}) &= \nabla \xb^*(\overline{\lb})(\lb - \bm{\eta}) \\
& = -\big[\nabla_{\xb\xb}^2L(\xb^*(\overline{\lb}),\overline{\lb})\big]^{-1}(W\lb - W\bm{\eta}),
\$
where the last equation follows from \eqref{eq:nabla_M}. Thus, we have
\$
\|\xb^*(\lb) - \xb^*(\bm{\eta})\| &= \big\|\big[\nabla_{\xb\xb}^2L(\xb^*(\overline{\lb}),\overline{\lb})\big]^{-1}(W\lb - W\bm{\eta})\big\| \\
&\le \big\|\big[\nabla_{\xb\xb}^2L(\xb^*(\overline{\lb}),\overline{\lb})\big]^{-1}\big\|\|W\lb - W\bm{\eta}\| \\
&\le \frac{1}{s}\|W\lb - W\bm{\eta}\|,
\$
where the last inequality follows from the $s$-strong convexity of $L(\cdot,\lb)$ in Lemma \ref{lem:L_property}. This concludes the proof.
\end{proof}
Now we prove Corollary \ref{coro:dish}.
\begin{proof}
Following from Theorem \ref{thm:strong}, we have 
\#\label{eq:Delta_r_linear}
\Delta^k \le (1-\rho)^k \Delta^0,
\#
where $\Delta^k = 9\Delta_{\lb}^{k} + \Delta_{\xb}^{k}$ is defined in \eqref{eq:merit}. Now we take a close inspection on $\Delta_{\lb}^{k}$ and $\Delta_{\xb}^{k}$, respectively. By the definition in \eqref{eq:terrors}, we have
\#\label{eq:Delta_lb}
\Delta_{\lb}^{k} &= g(\lb^*) - g(\lb^k) \notag \\
& = \tilde f^*(-W\lb^k) - \tilde f^*(-W\lb^*) \notag \\
& \ge \frac{s_{\tilde f^*}}{2} \|W\lb^k - W\lb^*\|^2 \notag\\
& \ge \frac{s_{\tilde f^*}s^2}{2}\|\xb^*(\lb^k) - \xb^*(\lb^*)\|^2,
\#
where the second equality is due to $g(\lb) = -\tilde f^*(-W\lb)$ for any $\lb$, the first inequality follows from the $s_{\tilde f^*}$-strong convexity of $\tilde f^*$ in Lemma \ref{lem:penalized_f_conjugate}, and the last inequality follows from Lemma \ref{lem:lip_x_star}. By the definition in \eqref{eq:terrors}, we also have
\#\label{eq:Delta_xb}
\Delta_{\lb}^{k} &= L(\xb^k,\lb^k) - L(\xb^*(\lb^k), \lb^k) \notag \\
&\ge \frac{s}{2}\|\xb^k - \xb^*(\lb^k)\|^2,
\#
where the inequality is due to the $s$-strong convexity of $L(\cdot,\lb^k)$ in Lemma \ref{lem:L_property} for any $\lb^k$. By substituting \eqref{eq:Delta_lb} and \eqref{eq:Delta_xb} in the definition of $\Delta^k$ in \eqref{eq:merit}, we have
\#\label{eq:metrit_bd_x}
\Delta^k &= 9\Delta_{\lb}^{k} + \Delta_{\xb}^{k} \notag \\
&\ge 9\frac{s_{\tilde f^*}s^2}{2}\|\xb^*(\lb^k) - \xb^*(\lb^*)\|^2 + \frac{s}{2}\|\xb^k - \xb^*(\lb^k)\|^2 \notag \\
&\ge \frac{s}{2}\min\big\{1,9s_{\tilde f^*}s\big\}\big[\|\xb^*(\lb^k) - \xb^*[\bm{\zeta}^{\textsf{OPT}}]\|^2 \notag \\
&\qquad+ \|\xb^k - \xb^*(\lb^k)\|^2 \big]\notag \\
&\ge \frac{s}{4}\min\big\{1,9s_{\tilde f^*}s\big\}\|\xb^k - \xb^*[\bm{\zeta}^{\textsf{OPT}}]\|^2, 
\#
where the last inequality follows from the inequality that $2(\|a\|^2 + \|b\|^2) \ge \|a+b\|^2$ for any $a,b\in\RR^{nd}$.
By substituting \eqref{eq:metrit_bd_x} in \eqref{eq:Delta_r_linear}, we have
\$
\frac{s}{4}\min\big\{1,9s_{\tilde f^*}s\big\}\|\xb^k - \xb^*[\bm{\zeta}^{\textsf{OPT}}]\|^2 \le (1-\rho)^k\Delta^0.
\$
By rearranging terms in the above relation and using $s_{\tilde f^*} = 1/\ell_L$ in Lemma \ref{lem:L_property}, we have
\$
\|\xb^k - \xb^*[\bm{\zeta}^{\textsf{OPT}}]\|^2 \le c(1-\rho)^k,
\$
where $c = 4\ell_L\Delta^0/[s\cdot\min\{\ell_L,9s\}]$. We note that $\xb^*(\lb^*) = \xb^{\textsf{OPT}}$ due to the strong duality. Therefore, this concludes the proof of the corollary.
\end{proof}

\end{document}